\documentclass[12pt, a4paper]{article}
\usepackage{graphicx}
\usepackage{amssymb}
\usepackage{stmaryrd}
\usepackage{enumitem}

\usepackage{geometry}
\usepackage{tikz}
\usepackage{float}
\usepackage{url}
\usepackage{mathtools}
\usepackage{amsmath,amsfonts,amsthm}
\usepackage{mathrsfs} 

\newcommand{\claim}[1]{\paragraph*{Claim #1:}}

\newtheorem{theorem}{Theorem}[section]
\newtheorem{lemma}[theorem]{Lemma}
\newtheorem{proposition}{Proposition}[section]
\newtheorem{corollary}[theorem]{Corollary}
\newtheorem{definition}{Definition}

\newtheorem{remark}{Remark}

\usepackage{pgfplots}
\pgfplotsset{compat=1.18}
\newcommand{\TT}{\mathcal{T}}
\newcommand{\irr}{\mathrm{irr}}
\newcommand{\SO}{\mathrm{SO}}

\usepackage{subfig}

\DeclarePairedDelimiter\floor{\lfloor}{\rfloor}

\usepackage{ytableau}
\usepackage[colorlinks,
linkcolor=blue,
anchorcolor=blue,
citecolor=blue
]{hyperref}
\begin{document}
\begin{center}
{\large \bf  Theta-Relations Among Degree-Based Tree Indices}
\end{center}
\begin{center}
 Jasem Hamoud$^1$ \hspace{0.2 cm}  Duaa Abdullah$^{2}$\\[6pt]
 $^{1,2}$ Physics and Technology School of Applied Mathematics and Informatics \\
Moscow Institute of Physics and Technology, 141701, Moscow region, Russia\\[6pt]
Email: 	 $^{1}${\tt hamoud.math@gmail.com},
 $^{2}${\tt abdulla.d@phystech.edu}
\end{center}
\noindent
\begin{abstract}
In this paper, degree-based topological indices play a key role in the structural analysis of graphs in this paper and have significant uses in chemical graph theory. We investigate the connections between three such tree indices: the Albertson, Sombor, and Sigma indices.
We show that the quadratic degree deviation, measured by the Sigma index, tightly controls the Sombor index of a tree by establishing sharp two-sided bounds. We demonstrate that the Sombor and Sigma indices are asymptotically equivalent up to constant factors as a direct result. A pure $\Theta$-relationship between the Sombor index and the Albertson index is derived by taking into account extremal trees with a fixed degree sequence.  This finding demonstrates that, in extremal configurations, quadratic degree interactions and absolute degree disparities scale appropriately. Overall, our data suggest that the Sombor index functions as an intermediate descriptor, capturing both global degree dispersion and local edge irregularity. 
From a structural standpoint, these findings clarify the relationship between vertex-based and edge-based irregularity measurements in trees.
\end{abstract}

\noindent\rule{16cm}{1.0pt}

\noindent\textbf{AMS Classification 2010:} 05C05, 05C12, 05C20, 05C25, 05C35, 05C76, 68R10.

\noindent\textbf{Keywords:} Topological indices, Sombor index, Trees, Irregularity.

\noindent\rule{16cm}{1.0pt}

\section{Introduction}
Throughout this paper, let $G=(V,E)$ be a simple connected graph with vertex set $V(G) = \{v_1, v_2, \dots, v_n\}$, where $n = |V(G)|$, and edge set $E(G) = \{e_1, e_2, \dots, e_m\}$, where $m = |E(G)|$. Let $\mathscr{D} = (d_1, d_2, \dots, d_n)$ be the degree sequence of $G$, where $d_w = d_G(v_w)$ is the degree of vertex $v_w \in V(G)$, arranged in non-increasing order. S.~L.~Hakimi~\cite{Hakimi1962} introduced the concept of realizability for a sequence of nonnegative integers as the degree sequence of a simple graph. 
Caro and Pepper~\cite{Caro2014Pepper} studied degree sequence indices and related strategies.
Several works~\cite{Schmidt1976Druffel,Baskin1987Gordeeva} have addressed the inverse problem for topological indices. One such index, the \emph{Albertson index}, was introduced in 1997 by M.~O.~Albertson~\cite{albertson1997irregularity} to quantify graph irregularity. Gutman~\cite{Gutman2018I} surveyed topological indices and irregularity measures, presenting several equivalent formulas for the Albertson index (also referred to as a measure of \emph{irregularity}) in~\cite{abdo2014total,Abdo2018Dimitrov,Albalahi2023Alanazi,Alex2024Indulal,Akg2019Naca,Arif2023Hayat}.

The Albertson index is generally defined as
$$
\operatorname{irr}(G) = \sum_{uv \in E(G)} |d_G(u) - d_G(v)|.
$$

This is not the only possible irregularity measure. Indeed, Criado et al.~\cite{Criado2014Flores} proposed an alternative irregularity index given by
$$
\operatorname{irr}(G) = \frac{1}{n} \sum_{i=1}^{n} \left| d_i - \frac{2m}{n} \right|.
$$
The \textit{Sigma index} (also called the $\sigma$-irregularity index), denoted $\sigma(G)$, is a degree-based topological index that measures the irregularity of a graph $G$. It is defined as
$$
\sigma(G)=\sum_{uv \in E(G)} (d_G(u)-d_G(v))^2,
$$
where the sum is over all edges and $d_G(u)$ is the degree of vertex $u$~\cite{Abdo2015Dimitrov,gutman2018inverse}.
An equivalent expression~\cite{Criado2014Flores}, related to the variance of the degree sequence, is
$$
\sigma(G) = \frac{1}{n} \sum_{i=1}^n \left( d_i - \frac{2m}{n} \right)^2.
$$
Relations to other topological indices appear in~\cite{Jahanbai2021Sheikholeslami}.
One of the most influential degree-based topological indices is the Sombor index, introduced by Gutman~\cite{Gutman2021Geo} approximately five years ago. For a graph $G$, the Sombor index is defined as
\[
\SO(G)=\sum_{uv\in E(G)}\sqrt{d_G(u)^2+d_G(v)^2}.
\]

Chemical graph theory, a subfield of mathematical chemistry, employs graph-theoretical methods to model and analyze molecular structures~\cite{Dac2026KCV}. Consider molecules are represented as graphs in which vertices correspond to atoms and edges represent chemical bonds.  Molecular descriptors --- most notably topological indices when defined on molecular graphs --- serve as essential tools in virtual screening and in the prediction of physicochemical properties of chemical compounds~\cite{Basak2024,Desmecht2024}.

Establishing rigorous structural relationships between the Sigma index, Sombor index, and Albertson index—three basic degree-based topological indices on trees—is the main goal of this work. With special attention to extremal configurations and fixed degree sequences, we elucidate the mathematical relationships among vertex-based degree dispersion (Sigma), edge-based degree interaction (Sombor), and degree-difference irregularity (Albertson) by creating a unified analytical framework.

There are various reasons why this study is important. First, while the Sigma, Sombor, and Albertson indices have all been studied separately, their interrelationships have not been thoroughly examined, particularly with regard to asymptotic growth rates and extreme behavior. The current findings show how these indices scale together and pinpoint the structural circumstances in which they become asymptotically equivalent up to constant factors. They also provide sharp two-sided bounds and pure $\Theta$-relations.
Second, the study shows that the Sombor index bridges the gap between vertex irregularity and edge irregularity by functioning as an intermediary descriptor. The Sombor index incorporates both the magnitude and the contrast of endpoint degrees across edges, whereas the Albertson index measures local degree contrasts and the Sigma index is only dependent on the degree sequence. 

Furthermore, the established relationships enhance the interpretability of Sombor-type descriptors used in QSPR/QSAR modeling in the context of chemical graph theory, where trees represent acyclic molecular structures. The findings make it easier to estimate, compare, and interpret degree-based molecular descriptors structurally by offering a theoretical connection between these indices. All things considered, this work contributes to the theoretical knowledge of degree-based irregularity measures and establishes a logical framework for further applied and extreme research.

 \section{Preliminaries}\label{sec2}
In this section, understanding how degree distributions constrain the global structure of trees is a central problem in extremal graph theory and its applications. Many important graph invariants---such as degree variance, irregularity indices, and Zagreb-type indices---are governed by the interplay between local vertex degrees and global structural parameters. In this context, we bound the degree--square deviation.

Several classical indices, such as the Zagreb indices, are also used to quantify structural features of graphs, particularly in chemical graph theory.
\begin{definition}[Zagreb Indices~\cite{gutman1972total, gutman1975acyclic}]
Let $ G = (V, E) $ be a graph. The first and second Zagreb indices are given by
\[
M_1(G) = \sum_{v \in V(G)} d_G(v)^2, \quad M_2(G) = \sum_{uv \in E(G)} d_G(u)d_G(v).
\]
\end{definition}
The following proposition provides a lower bound on the Albertson irregularity index of a graph in terms of its extremal degrees, shedding light on the impact of degree disparity.

\begin{proposition}[\cite{dorjsembe2022irregularity}]
Let $G$ be a graph with minimum degree $\delta$, maximum degree $\Delta$, and $n$ vertices. Then
$$
\operatorname{irr}(G) > \frac{\delta(\Delta - \delta)^2 \cdot n}{\Delta + 1}.
$$
\end{proposition}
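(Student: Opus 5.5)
Before trying to prove this proposition, I would test it on small cases, because the right-hand side does not use how the high- and low-degree vertices are arranged. That check shows the inequality cannot hold in the generality stated. So my plan is to show that it fails and then identify what extra hypothesis the cited source~\cite{dorjsembe2022irregularity} must assume.

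\emph{Regular graphs.} If $G$ is regular, then $\Delta=\delta$. Both sides equal $0$, so the strict inequality already fails. At a minimum we need $\Delta>\delta$.

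\emph{A non-regular counterexample.} Take the cycle $C_n$ with $n\ge 5$ and add one chord between two non-adjacent vertices $x$ and $y$. The resulting graph has $\delta=2$ and $\Delta=3$. The only edges with unequal endpoint degrees are the four cycle edges incident to $x$ or $y$, each contributing $|3-2|=1$. The chord itself contributes $0$. Hence $\operatorname{irr}(G)=4$. The right-hand side, however, is
\[
\frac{\delta(\Delta-\delta)^2 n}{\Delta+1}=\frac{2\cdot 1\cdot n}{4}=\frac n2 ,
\]
which exceeds $4$ as soon as $n>8$. The family is connected, so connectivity alone does not rescue the statement. In general, $\operatorname{irr}(G)$ can stay bounded while the right-hand side grows linearly in $n$.

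\emph{What the correct statement likely needs.} The inequality does hold for stars. For $K_{1,n-1}$ we have $\operatorname{irr}=(n-1)(n-2)$, while the right-hand side equals $(n-2)^2$. This suggests the intended result concerns a restricted class in which every vertex of degree $\delta$ is adjacent to a vertex of degree $\Delta$, or graphs with exactly two degree classes, as in bidegreed and complete-bipartite-like graphs.

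\emph{Proof plan under such a hypothesis.} I would use the two-class assumption to count edges between the classes. Each $\delta$-vertex sends at least one edge to the $\Delta$-class, and the $\Delta$-class has size $n_\Delta\le n\delta/(\Delta+\delta)$. From this I would derive $\operatorname{irr}(G)\ge (\Delta-\delta)\cdot\#\{\text{cross edges}\}$. Then I would bound the number of cross edges from below by a quantity of order $\delta(\Delta-\delta)n/(\Delta+1)$.

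The main obstacle is exactly this counting step. Without a structural hypothesis, the number of cross edges can be $O(1)$, as the counterexample above shows. So the decisive step would be to extract the precise hypothesis from~\cite{dorjsembe2022irregularity} before attempting any proof, rather than to prove the displayed inequality as written.
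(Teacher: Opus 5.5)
Your disproof is correct, and it is the appropriate response here. The paper gives no argument for this proposition; it is only quoted from \cite{dorjsembe2022irregularity}, and as printed the inequality is false. The regular case checks out. So does your cycle-plus-chord family: $\operatorname{irr}=4$ against a right-hand side of $n/2$, so the strict inequality already fails at $n=8$. A counterexample closer to the setting of this paper is the path. $P_n$ is a tree with $\delta=1$, $\Delta=2$ and $\operatorname{irr}(P_n)=2$, while the right-hand side equals $n/3$, so the claim fails for every $n\geq 6$.

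What does not hold up is your guess at the missing hypothesis and the proof sketched under it. Your own counterexample has exactly two degree classes ($2$ and $3$), so ``bidegreed'' cannot be the repair. Also requiring that every vertex of degree $\delta$ have a neighbour of degree $\Delta$ does not help. Join two vertices by five internally disjoint paths of length $3$. This graph has $n=12$, $\delta=2$, $\Delta=5$, and every degree-$2$ vertex has a degree-$5$ neighbour. Yet $\operatorname{irr}=10\cdot 3=30<36=\delta(\Delta-\delta)^2n/(\Delta+1)$.

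The counting step you aim for also fails in this example. There are only $10$ cross edges, fewer than $\delta(\Delta-\delta)n/(\Delta+1)=12$. Moreover, the bound $n_\Delta\leq n\delta/(\Delta+\delta)$ is not implied by either hypothesis. It is an identity only when every edge joins the two classes, and already $K_4-e$ has $n_\Delta=2>8/5$ while satisfying both conditions. So keep the counterexamples and drop the proof plan. The honest conclusion is that the correct hypothesis has to be recovered from the cited source.
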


The following lemma summarizes the extremal behavior of two well-known irregularity measures---the Albertson index and the Sigma index---within the class of trees. These bounds provide a reference for understanding the behavior of vertex-based and edge-based irregularity measures in extremal tree configurations and serve as a benchmark for comparison with more structured families of trees (such as caterpillars) studied later in this work.

\begin{lemma}[\cite{fath2013extremely,gutman2018inverse}]
Let $T$ be a tree on $n \geq 3$ vertices. Then
\begin{enumerate}
\item the Albertson irregularity index satisfies
  $$
  \operatorname{irr}(T) \le (n-2)(n-1),
  $$
  with equality if and only if $T$ is the star $S_n$;

  \item the Sigma index satisfies
  $$
  0 \le \sigma(T) \le (n-1)(n-2),
  $$
  where $\sigma(T) = 0$ if and only if $T$ is regular (i.e., $T \cong P_2$), and $\sigma(T) = (n-1)(n-2)$ if and only if $T$ is the star $S_n$.
\end{enumerate}
\end{lemma}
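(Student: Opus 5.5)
The plan is to bound every edge term separately by its largest possible value in a tree on $n$ vertices, and then to sum over the $n-1$ edges. Both parts of the lemma use the same elementary observation. In a tree $T$ on $n\ge 3$ vertices every degree satisfies $1\le d_T(v)\le n-1$. Hence for each edge $uv$,
$$|d_T(u)-d_T(v)|\le (n-1)-1=n-2.$$
Equality holds if and only if one endpoint has degree $n-1$ and the other has degree $1$.

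For part 1, summing this over the $n-1$ edges gives $\operatorname{irr}(T)\le (n-1)(n-2)$. If equality holds, every edge must join a vertex of degree $n-1$ to a leaf. A vertex of degree $n-1$ in a tree on $n$ vertices is adjacent to all other vertices, so $T\cong S_n$. Conversely, in $S_n$ each of the $n-1$ edges contributes exactly $n-2$.

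For part 2, the lower bound $\sigma(T)\ge 0$ is immediate because every term is a square. Moreover, $\sigma(T)=0$ exactly when the endpoints of every edge have equal degrees. Since $T$ is connected, that is equivalent to $T$ being regular. A tree with a leaf whose neighbour has degree at least $2$ is not regular, so among all trees only $P_2$ is regular. In particular, for $n\ge 3$ we always have $\sigma(T)>0$, and I would record this as the precise content of the ``iff'' clause.

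For the upper bound I would square the edge estimate, which gives $(d_T(u)-d_T(v))^2\le (n-2)^2$, with the same equality case as before. Summing over the edges yields
$$\sigma(T)\le (n-1)(n-2)^2,$$
with equality if and only if $T\cong S_n$, by the same argument as in part 1.

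The main obstacle is reconciling this with the statement as printed. A direct computation on the star gives $\sigma(S_n)=(n-1)(n-2)^2$, and this exceeds $(n-1)(n-2)$ as soon as $n\ge 4$; for example, $\sigma(S_4)=12>6$. So the printed bound $\sigma(T)\le (n-1)(n-2)$, with equality at the star, holds only when $n=3$. My proof will establish the sharp bound $(n-1)(n-2)^2$, attained exactly at $S_n$. I would flag that the exponent on $(n-2)$ in the statement should be $2$, presumably as it appears in \cite{gutman2018inverse}, rather than attempt to prove the inequality exactly as worded.
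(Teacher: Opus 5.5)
Your proof is correct, and you are right to object to part 2 as printed. The paper gives no proof of this lemma; it is only quoted from the cited sources, so there is no argument of the paper's to compare with.

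The per-edge bound $|d_T(u)-d_T(v)|\le n-2$, with equality exactly on edges joining degrees $1$ and $n-1$, settles part 1. It also settles the corrected form of part 2, together with both equality cases.

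Under the paper's own edge-sum definition, $\sigma(S_n)=(n-1)(n-2)^2$. So the printed bound $\sigma(T)\le (n-1)(n-2)$ fails for every $n\ge 4$ (e.g.\ $\sigma(S_4)=12>6$), and it is sharp only at $n=3$, where $S_3\cong P_3$. The correct statement is $\sigma(T)\le (n-1)(n-2)^2$, with equality if and only if $T\cong S_n$, which is exactly what you prove.

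The other formulas the paper also labels $\sigma$ do not rescue the printed statement either. Take $\sigma(T)=\sum_v d_T(v)^2-4(n-1)^2/n$, which the paper uses later. The first Zagreb index over trees is maximized by the star, with $M_1(S_n)=n(n-1)$. This gives $\sigma(T)\le (n-1)(n-2)^2/n<(n-1)(n-2)$, so the inequality holds but equality at $S_n$ never occurs. The $1/n$-normalized variance form is smaller still. Hence no reading of the paper's definitions makes the stated equality case true for $n\ge 4$.

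Your remark on the zero case is also right. Under the hypothesis $n\ge 3$, the clause $\sigma(T)=0\iff T\cong P_2$ is vacuous, and $\sigma(T)>0$ for every tree considered.
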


Several important results concerning the Sombor index that are essential for understanding the findings presented later.

\begin{theorem}[\cite{Gutman2021Geo}]
Let $K_n$ denote the complete graph on $n$ vertices and let $\overline{K_n}$ denote its complement (the empty graph on $n$ vertices). Then for every graph $G$ of order $n$,
$$
\SO(\overline{K_n}) \leq \SO(G) \leq \SO(K_n),
$$
with equality if and only if $G \cong \overline{K_n}$ or $G \cong K_n$.  
\end{theorem}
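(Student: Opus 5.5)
The statement depends only on how $\SO$ behaves under adding or removing edges, so I would compare every graph of order $n$ with the two extreme graphs on the same vertex set, $\overline{K_n}$ and $K_n$. For the lower bound: $\overline{K_n}$ has no edges, so $\SO(\overline{K_n})=0$. Every summand $\sqrt{d_G(u)^2+d_G(v)^2}$ is nonnegative, hence $\SO(G)\ge 0$. Equality forces the sum to be empty, because any edge $uv$ has $d_G(u),d_G(v)\ge 1$ and therefore contributes at least $\sqrt 2>0$. So $\SO(G)=0$ holds exactly when $E(G)=\emptyset$, that is, when $G\cong\overline{K_n}$.

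For the upper bound, the key step is strict monotonicity under edge addition. Let $G$ have order $n$, and suppose $G\neq K_n$, so some pair $x,y$ is non-adjacent. Put $G'=G+xy$. Passing from $G$ to $G'$:
\begin{itemize}
\item every edge of $G$ keeps its endpoint degrees or has one of them increased by one;
\item the function $(a,b)\mapsto\sqrt{a^2+b^2}$ is nondecreasing in each argument for $a,b\ge 0$, so no existing summand decreases;
\item the new edge $xy$ adds a strictly positive term.
\end{itemize}
Hence $\SO(G')>\SO(G)$. Starting from $G$, add the missing edges one at a time. This gives a chain $G=G_0,G_1,\dots,G_k=K_n$ along which $\SO$ strictly increases. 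Therefore $\SO(G)\le\SO(K_n)$, with equality only when $k=0$, i.e.\ $G\cong K_n$.

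Computing the extreme values explicitly is routine: in $K_n$ there are $\binom n2$ edges, each joining two vertices of degree $n-1$, so $\SO(K_n)=\binom n2\sqrt2\,(n-1)=\frac{n(n-1)^2}{\sqrt2}$.

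There is no real obstacle. The only point needing care is the equality characterization: strictness must come from the positive contribution of the newly added edge, not from growth of the old terms, since an old term may be unchanged when neither of its endpoints is $x$ or $y$. Note also that the lower bound needs no connectivity assumption, even though the paper's standing convention takes $G$ connected; otherwise $\overline{K_n}$ would not be admissible for $n\ge 2$.
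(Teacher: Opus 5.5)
Your proof is correct and complete. The fact that $\SO(\overline{K_n})=0$, together with the observation that every edge contributes at least $\sqrt{2}$, settles the left inequality and its equality case. Adding a missing edge $xy$ never decreases an existing summand and adds a new positive one, so $\SO$ strictly increases along the chain of edge additions from $G$ to $K_n$; this gives the right inequality and its equality case.

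The paper does not prove this statement; it only quotes it from \cite{Gutman2021Geo}, so there is no argument of the paper's to compare against. Your edge-addition monotonicity argument is the natural and standard one. Your remark that the paper's standing connectivity convention has to be dropped here is correct, since otherwise $\overline{K_n}$ would not be admissible for $n\ge 2$.
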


\begin{theorem}[\cite{Gutman2021Geo,Liu2023GutmanYou}]
Let $P_n$ denote the path on $n$ vertices. Then for every connected graph $G$ of order $n$,
$$
\SO(P_n) \leq \SO(G) \leq \SO(K_n),
$$
with equality if and only if $G \cong P_n$ or $G \cong K_n$. For $n \geq 3$,
$\SO(P_n)=2\sqrt{5} + 2(n-3)\sqrt{2}$.
\end{theorem}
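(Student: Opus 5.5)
The statement combines two facts: $\SO(P_n)$ is the minimum and $\SO(K_n)$ the maximum over connected graphs of order $n$, and the closed form for $\SO(P_n)$. I will prove the upper bound, the lower bound, and the formula in that order.

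\emph{Upper bound.} For the upper bound I will use the monotonicity of $\SO$ under edge addition, which is the argument behind the preceding theorem for arbitrary graphs. Adding an edge $xy$ to $G$ creates a new positive term $\sqrt{d(x)^2+d(y)^2}$. It also increases every existing term involving $x$ or $y$, because $t\mapsto\sqrt{t^2+s^2}$ is strictly increasing for $t\ge 0$. Any connected $G\ne K_n$ can be completed to $K_n$ by adding the missing edges one at a time. This gives $\SO(G)<\SO(K_n)$ unless $G\cong K_n$.

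\emph{Lower bound.} This is the main obstacle. Every connected graph contains a spanning tree $T$, and deleting edges strictly decreases $\SO$ by the same monotonicity. So $\SO(G)\ge \SO(T)$, with equality only if $G=T$, and it suffices to show $\SO(T)\ge \SO(P_n)$ for every tree $T$ of order $n$, with equality only for the path.

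For trees I will compare term by term. Since $\sqrt{a^2+b^2}\ge (a+b)/\sqrt2$, we have
\[
\SO(T)\ \ge\ \frac1{\sqrt2}\sum_{uv}(d_u+d_v)=\frac{M_1(T)}{\sqrt2}.
\]
Note that $M_1(P_n)=4n-6$. This crude bound is not enough on its own, since $M_1(P_n)/\sqrt2$ is smaller than $\SO(P_n)$. I will instead classify edges:
\begin{itemize}
\item A pendant edge $uv$ with $d_u=1$ and $d_v=d\ge2$ contributes $\sqrt{1+d^2}\ge\sqrt5$.
\item A non-pendant edge contributes at least $2\sqrt2$, with equality iff both ends have degree $2$.
\end{itemize}
If $T$ has $\ell\ge 2$ leaves, then
\[
\SO(T)\ \ge\ \ell\sqrt5+(n-1-\ell)\,2\sqrt2 .
\]
Since $\sqrt5<2\sqrt2$, more leaves give a smaller value of this expression, so the bound alone fails when $\ell\ge3$.

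To fix this, I will use that the leaves force large degrees: $\sum_v (d_v-2)=-2$ gives $\sum_{d_v\ge3}(d_v-2)=\ell-2$. Each vertex of degree $d\ge3$ has $d$ incident edges, each contributing at least $\sqrt{d^2+1}$ rather than $\sqrt5$ or $2\sqrt2$. I will show that the gain from these edges at each branching vertex, namely $d\sqrt{d^2+1}-d\cdot 2\sqrt2\ge (d-2)(2\sqrt2-\sqrt5)$, compensates for the loss $(\ell-2)(2\sqrt2-\sqrt5)$. This needs some care with double counting of edges joining two branching vertices, but those edges are even larger.

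Equality forces $\ell=2$ and all internal degrees equal to $2$, so $T=P_n$. An alternative route is the standard transformation that moves a pendant path from a branching vertex toward a leaf, showing $\SO$ strictly decreases toward $P_n$.

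\emph{Formula.} For $n\ge3$, $P_n$ has two pendant edges, each contributing $\sqrt{1+4}=\sqrt5$. Its remaining $n-3$ edges join two vertices of degree $2$, each contributing $\sqrt{8}=2\sqrt2$. This gives $2\sqrt5+2(n-3)\sqrt2$.
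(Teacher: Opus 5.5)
The paper gives no proof of this statement; it is quoted from Gutman and from Liu--Gutman--You, so your argument has to stand on its own. Most of it does. The upper bound, the reduction to a spanning tree, the identity $\sum_{d_v\ge 3}(d_v-2)=\ell-2$, the per-vertex inequality $d(\sqrt{d^2+1}-2\sqrt2)\ge(d-2)(2\sqrt2-\sqrt5)$, and the formula for $\SO(P_n)$ are all correct.

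\textbf{The gap: double counting.} The remark ``those edges are even larger'' does not resolve it. Suppose every branching vertex claims the excess $\sqrt{d_v^2+1}-2\sqrt2$ from each of its $d_v$ edges. An edge joining two vertices of degree $5$ is then claimed $2(\sqrt{26}-2\sqrt2)\approx 4.54$, but its excess over the baseline is only $\sqrt{50}-2\sqrt2\approx 4.24$. The per-vertex claims genuinely overdraw the available excess. Take a degree-$10$ vertex joined to ten degree-$10$ vertices, each carrying nine pendant paths of length two. Your claims total about $794$, while the total excess is about $776$. The conclusion survives only because the compensation actually needed, $(\ell-2)(2\sqrt2-\sqrt5)\approx 52$ here, is far smaller than what you claim.

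\textbf{Repair.} Measure each edge's excess against its own baseline: $\sqrt5$ for pendant edges, $2\sqrt2$ otherwise. Give the whole excess to the branching endpoint when the other endpoint has degree at most $2$. Split it equally when both endpoints branch. A branching vertex $v$ then receives from each incident edge at least $\min\{\sqrt{10}-\sqrt5,\ \sqrt{13}-2\sqrt2,\ \tfrac12(\sqrt{18}-2\sqrt2)\}=\tfrac{\sqrt2}{2}$. Since $\tfrac{\sqrt2}{2}>2\sqrt2-\sqrt5$, $v$ receives strictly more than $(d_v-2)(2\sqrt2-\sqrt5)$. This gives both the bound and strict inequality whenever $\ell\ge 3$.

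\textbf{Bookkeeping-free alternative.} For integers $a,b\ge1$ with $a+b\ge3$,
\[
\sqrt{a^2+b^2}\ \ge\ (2\sqrt2-\sqrt5)(a+b)+4\sqrt5-6\sqrt2 ,
\]
with equality only for $\{a,b\}=\{1,2\}$ or $a=b=2$. To verify it, check $a+b=3$ and $a+b=4$ directly, and use $\sqrt{a^2+b^2}\ge(a+b)/\sqrt2$ for $a+b\ge5$. Summing over edges gives $\SO(T)\ge(2\sqrt2-\sqrt5)M_1(T)+(4\sqrt5-6\sqrt2)(n-1)$. Then $M_1(T)\ge 4n-6$, which follows from $\sum_v(d_v-1)(d_v-2)\ge0$ with equality iff $T\cong P_n$. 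Substituting, the right-hand side becomes exactly $\SO(P_n)$, and the equality case comes for free.
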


\subsection{Problem Statement}
Assume $\TT_{n,\Delta}$ be the family of tree of order $n$ with the maximum degree $\Delta$. Let $\mathscr{D}=(d_1,d_2,\dots,d_n)$ be a degree sequence where $d_n\geqslant d_{n-1}\geqslant \dots,\geqslant d_2\geqslant d_1$. Let $\eta$ be an integer defined as 
\begin{equation}
\eta=\floor*{\frac{kn}{2}}, \quad k=\frac{1}{n-\Delta}\sum_{i=1}^{n}d_i^2.
\end{equation}
Let $\TT\in \TT_{n,\Delta}$ be a tree given by Figure~\ref{familytren01r} as 
\begin{figure}[H]
    \centering
 \begin{tikzpicture}[scale=.8]
\draw  (3,7)-- (5,7);
\draw [dotted] (5,7)-- (7,7);
\draw  (7,7)-- (9,7);
\draw  (9,7)-- (10,8);
\draw  (9,7)-- (10,6);
\draw  (3,7)-- (2,8);
\draw  (3,7)-- (2,6);
\draw (3,7) node[anchor=north west] {$v_1$};
\draw (8.3,7) node[anchor=north west] {$v_n$};
\draw (9.9,7.54) node[anchor=north west] {$\vdots$};
\draw (1.9,7.58) node[anchor=north west] {$\vdots$};
\begin{scriptsize}
\draw [fill=black] (3,7) circle (1.5pt);
\draw [fill=black] (5,7) circle (1.5pt);
\draw [fill=black] (7,7) circle (1.5pt);
\draw [fill=black] (9,7) circle (1.5pt);
\draw [fill=black] (10,8) circle (1.5pt);
\draw [fill=black] (10,6) circle (1.5pt);
\draw [fill=black] (2,8) circle (1.5pt);
\draw [fill=black] (2,6) circle (1.5pt);
\end{scriptsize}
\end{tikzpicture}
\caption{An example of tree $\TT\in \TT_{n,\Delta}$.}
\label{familytren01r}
\end{figure}
Hence, for $\TT_1$ we have $d(v_1)=d(v_n)=\lambda_1$, $d(v_2)=d(v_3)=\dots=d(v_{n-1})=\mu_1$, for $\TT_2$ we have $d(v_1)=d(v_n)=\lambda_2$, $d(v_2)=d(v_3)=\dots=d(v_{n-1})=\mu_2$, with 
\begin{equation}
\mu=\floor*{\frac{\eta-2}{\eta-n}}, \quad \floor*{\frac{\eta-k}{\eta-n}}=1.
\end{equation}
where the sequences $\lambda=(\lambda_1,\lambda_2,\dots,\lambda_n)$, $\mu=(\mu_1,\mu_2,\dots,\mu_n)$ and $1\leqslant \mu \leqslant 2$. The question is: what is the relationship between Sombor index and Sigma index especially, 
\[
|SO(\TT)|\asymp \sigma(\TT)+\frac{4m^2}{n}
\]
Not directly proportional to $\sigma(\TT)$.

\paragraph*{Highlights}
\begin{itemize}
  \item Sharp two-sided bounds are established that relate the Sombor index and the Sigma index for trees.
  \item A precise $\Theta$-relationship between the Sombor index and the Albertson index is derived for extremal trees.
  \item The Sombor index is shown to interpolate between vertex-based and edge-based irregularity measures.
  \item Structural characterizations of the extremal trees are provided for any fixed degree sequence.
  \item Implications for molecular graph descriptors and degree-based topological indices are discussed.
\end{itemize}

\section{Main Result}
In this section, we start with Proposition~\ref{fibpron1task} which it is crucial as it provides tight bounds on the degree irregularity of trees using solely global parameters, identifies extremal tree structures, and offers a stable inductive framework for analyzing degree-based topological invariants.

\begin{proposition}~\label{fibpron1task}
Let $\TT\in \TT_{n,\Delta}$ be a tree of order $n$ with integers $\mu$ and $\eta$, $\Delta$ is the maximum degree of $\TT$. Then, the Albertson index and Sigma index satisfy
\begin{equation}~\label{eqq1fibpron1task}
\floor*{\frac{3(\eta-n)^2}{k\Delta}}  \leqslant \sigma(\TT) \leqslant \eta(2n\mu^2+\eta\mu\,(\mu-1)\,\irr(\TT)).
\end{equation}
\end{proposition}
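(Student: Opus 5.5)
The plan is to treat the two inequalities in \eqref{eqq1fibpron1task} separately. The upper bound should follow from the extremal Lemma on trees, while the lower bound needs an estimate of $\sigma(\TT)$ from below in terms of $M_1(\TT)$, $n$ and $\Delta$. Before starting I would unpack the parameters. Since $\TT$ is a tree, $\sum_i d_i=2(n-1)$, so by Cauchy--Schwarz $M_1(\TT)=\sum_i d_i^2\ge 4(n-1)^2/n$. Because $n-\Delta\le n-1$, this gives $k=M_1(\TT)/(n-\Delta)\ge 4(n-1)/n$, and therefore $\eta=\lfloor kn/2\rfloor\ge 2n-2$. In particular $\eta-n\ge n-2>0$, so the quotients defining $\mu$ make sense. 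The hypothesis $1\le\mu\le 2$ of the problem statement is what I would use to control the factors involving $\mu$.

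For the \textbf{upper bound}, $\mu\ge1$ gives $\mu(\mu-1)\ge 0$, and $\irr(\TT)\ge0$, so the right-hand side of \eqref{eqq1fibpron1task} is at least $2n\mu^2\eta\ge 2n\eta\ge 2n(2n-2)$. By the extremal Lemma (part 2), $\sigma(\TT)\le (n-1)(n-2)$, and $(n-1)(n-2)<4n(n-1)$. The upper bound then follows with a large margin. Sharpening it to involve $\irr(\TT)$ genuinely is unnecessary, since that term only enlarges the right-hand side.

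For the \textbf{lower bound}, I would first substitute $\eta\approx kn/2$, which gives $\eta-n\approx n(k-2)/2$. The claimed bound then reads roughly $\sigma(\TT)\gtrsim 3n^2(k-2)^2/(4k\Delta)$. To prove it, I would try to bound $\sigma(\TT)$ below by the degree variance $\sum_i d_i^2-4(n-1)^2/n$, using that every non-leaf vertex is adjacent to something of different degree.

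This is where I expect the \textbf{main obstacle}, and on checking a small case I believe the lower bound cannot hold as stated. Take $\TT=P_5$. Then $\Delta=2$, $M_1=14$ and $k=14/3$, so $\eta=\lfloor 35/3\rfloor=11$ and $\eta-n=6$. This gives $\mu=\lfloor 9/6\rfloor=1$ and $\lfloor(\eta-k)/(\eta-n)\rfloor=\lfloor 19/18\rfloor=1$, so every side condition holds. Yet
\[
\floor*{\frac{3\cdot 36}{(14/3)\cdot 2}}=\floor*{\frac{81}{7}}=11>2=\sigma(P_5).
\]
More generally, for paths the left-hand side grows like $3n^2/8$ while $\sigma(P_n)=2$.

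So my plan would be to prove the upper bound as above. For the lower bound I would look for the missing hypothesis, for example restricting to trees where $M_1(\TT)$ is of order $n\Delta$, as for star-like trees. For such trees, $\sigma(\TT)\ge \Delta(\Delta-1)^2$ from the edges at a maximum-degree vertex might dominate $3(\eta-n)^2/(k\Delta)$. Alternatively, the intended left-hand side may be something like $\lfloor 3(\eta-n)^2/(k\Delta n^2)\rfloor$. Without such a modification, I would report the $P_5$ computation as a counterexample rather than attempt a proof.
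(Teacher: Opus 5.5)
Your counterexample is correct, and it settles the question. The left-hand inequality in \eqref{eqq1fibpron1task} is false, so there is nothing to prove there. For $P_5$ one gets $M_1=14$, $k=14/3$, $\eta=11$, $\mu=\lfloor 9/6\rfloor=1$ and $\lfloor 19/18\rfloor=1$. Yet $\lfloor 81/7\rfloor=11>\sigma(P_5)=2$; the left side also exceeds $6/5$, the value of the variance form $M_1-4(n-1)^2/n$ that the paper switches to later.

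The failure is not special to paths. Take the double broom of Figure~\ref{familytren01r} with two leaves at each end and a spine of $10$ vertices ($n=14$, $\Delta=3$, $M_1=54$). It has $\eta=34$, $\mu=1$ and $\lfloor(\eta-k)/(\eta-n)\rfloor=\lfloor 16/11\rfloor=1$. Its left-hand side is $81$, but $\sigma=18$.

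The paper's own argument breaks where you expected. It asserts $\sigma(\TT)>2n^2-2$ and uses this to obtain \eqref{eqq2fibpron1task} and then \eqref{eqq3fibpron1task}. All three are false for $P_5$; for instance, \eqref{eqq3fibpron1task} would require $\sigma(P_5)\ge 1/8+27/7$. The factor $3$ in the stated bound does not even appear in \eqref{eqq3fibpron1task}. For the upper bound, the paper ends with \eqref{eqq5fibpron1task}. That is a lower bound on $\sigma(\TT)$, it is false for $P_5$ ($10>2$), and it could not imply an upper bound anyway.

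The genuine gap in your proposal is in the upper bound. You import $\sigma(\TT)\le(n-1)(n-2)$ from the lemma in Section~\ref{sec2}. For the edge sum $\sum_{uv}(d(u)-d(v))^2$ that you use (it is what gives $\sigma(P_5)=2$), that bound is misquoted: $\sigma(S_n)=(n-1)(n-2)^2$. It is valid only for the variance form. Since $\sigma$ can be cubic in $n$, your quadratic estimate $2n\eta\ge 4n(n-1)$ cannot close the argument, even within the class satisfying all side conditions. Take a broom with a centre of degree $\Delta=60$, $59$ pendant leaves and a pendant path on $140$ vertices ($n=200$, $M_1=4216$). It has $\eta=3011$ and $\mu=1$, satisfies the side conditions, and has $\sigma=208744>4n(n-1)=159200$.

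The repair is to keep the dependence of $\eta$ on $M_1$:
\begin{itemize}
\item Since $n-\Delta\le n-1$, we get $kn/2\ge M_1/2$, so $\eta\ge(M_1-1)/2$.
\item Hence $2n\eta\ge n(M_1-1)\ge(n-1)M_1$, because $M_1\ge\sum_v d(v)=2n-2\ge n$.
\item On the other side, $\sigma(\TT)\le\sum_{uv}(d(u)^2+d(v)^2)=\sum_v d(v)^3\le\Delta M_1\le(n-1)M_1$.
\end{itemize}
Combine this with $\mu\ge1$ (automatic, since $\eta-2\ge\eta-n>0$) and $\irr(\TT)\ge0$. The right-hand inequality then follows for every tree and for either reading of $\sigma$. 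This direct comparison is unrelated to the paper's induction, and unlike that induction it is valid.
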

\begin{proof}
Recall $\TT$ be a tree of order $n$ with integers $\mu$, $\eta$, $\Delta$. We prove our argument~\eqref{eqq1fibpron1task} in two steps: first, we deduce the upper bound and then establish the lower bound. For $n=3$, the unique tree is the path $\mathcal{P}_3$ with degree sequence $(1,2,1)$. Thus, $\sigma(\mathcal{P}_3)\leqslant \eta(2n\mu^2).$ Thus, the inequality holds for $n=3$. Consider $\Delta\geqslant 3$ and according to the term of $\mu$ it satisfied $1\leqslant \mu \leqslant 2$. Assume that for every tree $\TT\in\TT_{n,\Delta}$, the relationship~\eqref{eqq1fibpron1task} holds.  Thus, $\mu\,(\eta-n)+2\leqslant \eta \leqslant (\mu+1)\,(\eta-2)+2$. Since $\eta>n$ and $\sigma(\TT)>2n^2-2$, we find that 
\begin{equation}~\label{eqq2fibpron1task}
\sigma(\TT)\leqslant  \mu\,(\eta-n)-2n^2+2.
\end{equation}
Let $\TT' \in \TT_{n+1,\Delta}$. Since $\TT'$ is a tree, it has a pendant vertex $u$ adjacent to a vertex $v$. Let $\TT=\TT'-u$. Then $d_{\TT}(v)=d_{\TT'}(v)-1$ and $d_{\TT'}(u)=1.$ 
Therefore, for the degrees $d(x)\geqslant d(y) \geqslant d(z) \geqslant 3$ consist of tree $\TT'$ drived from $\TT$ be omitted the edges $wy$ and $xw$ where $\TT'=\TT-yw+xw$. Thus, consider $\Delta>d(x)$. Then, 
\begin{align*}
\irr(\TT)-\irr(\TT')&=\sum_{uv\in E(\TT)}|d(u)-d(v)|-\sum_{xy\in E(\TT')}|d(x)-d(y)|\\
&=|d(y)-d(w)|+|d(x)-d(w)|-|d(y)-d(w)-1|-|d(x)-d(w)+1|\\
&=2(d_\TT(x)-d_\TT(y))\\
&\geqslant \mu \left(|d(y)-d(w)|+|d(x)-d(w)|\right).
\end{align*}
Therefore, $\irr(\TT)-\irr(\TT')\geqslant 2$, thus, $\irr(\TT)>\irr(\TT')$. Since $\sqrt{\sigma(\TT)}\leqslant \irr(\TT)\leqslant \sqrt{m\,\sigma(\TT)}$ and $\sigma(\TT)>2\,\irr(\TT)$ implies that trees $\TT$ and $\TT'$ satisfying the Sigma index condition also satisfy $\sigma(\TT)-\sigma(\TT')\geqslant 4$, thus, $\sigma(\TT)>\sigma(\TT')$. Since $d(v) \leqslant \Delta$, we have
$k'\leqslant k+\frac{2\Delta+2}{n-\Delta}.$
Thus,
$$
\sigma(\TT')=\sigma(\TT)+(d(v)+1-d^{\prime}(v))^2-(d(v)-d^{\prime}(v)^2+(1-d^{\prime}(v)^2).
$$
Thus,
\begin{equation}~\label{eqq03fibpron1task}
\sigma(\TT') \leqslant \sigma(\TT)+2\eta\mu^2+\eta\mu(\mu-1)\irr(\TT'),
\end{equation}
from~\eqref{eqq2fibpron1task} and \eqref{eqq03fibpron1task}, we find that $\sigma(\TT)>(\mu\,(\eta-n))/(2n^2-2)$. Then,
\begin{equation}~\label{eqq3fibpron1task}
\sigma(\TT)\geqslant \frac{\mu\,(\eta-n)}{2n^2-2}+\frac{(\eta-n)^2}{k\Delta}.
\end{equation}
Hence, from~\eqref{eqq2fibpron1task}--\eqref{eqq3fibpron1task} the left hand of the relationship~\eqref{eqq1fibpron1task} holds. 
Since $\irr(\TT)\leqslant \sqrt{m\,\sigma(\TT)}$ and $\mu(\mu-1)<m$ implies that $\sigma(\TT)>m\,\irr(\TT)$. Thus, $\sigma(\TT)\geqslant \mu\,(\mu-1)\,\irr(\TT)+\Delta(\Delta-1)^2$. Then 
\begin{equation}~\label{eqq4fibpron1task}
\sigma(\TT)\geqslant \frac{\mu\,(\eta-n)}{2n^2-2}+\eta\mu\,(\mu-1)\,\irr(\TT).
\end{equation}
Also, for $n$ it implies that $n>m>\mu^2$ and $2n^2-2>\eta\,\mu$. As a result $\sigma(\TT)>2n^2-2+\eta\,\mu$, by considering $\irr(\TT)-\irr(\TT')\geqslant \mu \left(|d(y)-d(w)|+|d(x)-d(w)|\right),$ according to~\eqref{eqq3fibpron1task} and \eqref{eqq4fibpron1task} we find that 
\begin{equation}~\label{eqq5fibpron1task}
\sigma(\TT) \geqslant 2n\mu^2+\eta\mu\,(\mu-1)\,\irr(\TT),
\end{equation}
yields
$$
\sigma(\TT') \leqslant \eta\bigl(2(n+1)\mu^2+\eta\mu(\mu-1)\irr(\TT')\bigr).
$$
The lower bound follows from the monotonicity of $\eta$ and $k$. Hence, the inequality holds for $n+1$. Thus, from~\eqref{eqq5fibpron1task} the right hand of relationship~\eqref{eqq1fibpron1task} holds.
\end{proof}
For example, let $\mathscr{D}=(2^{11}, 10)$ be a degree sequence given by Figure~\ref{figxxxLowern1}. Then, $\sigma(\TT)=794$, $\irr(\TT)=90$, lower bound is 573 and the upper bound is 8208.
\begin{figure}[H]
    \centering
    \includegraphics[width=0.5\linewidth]{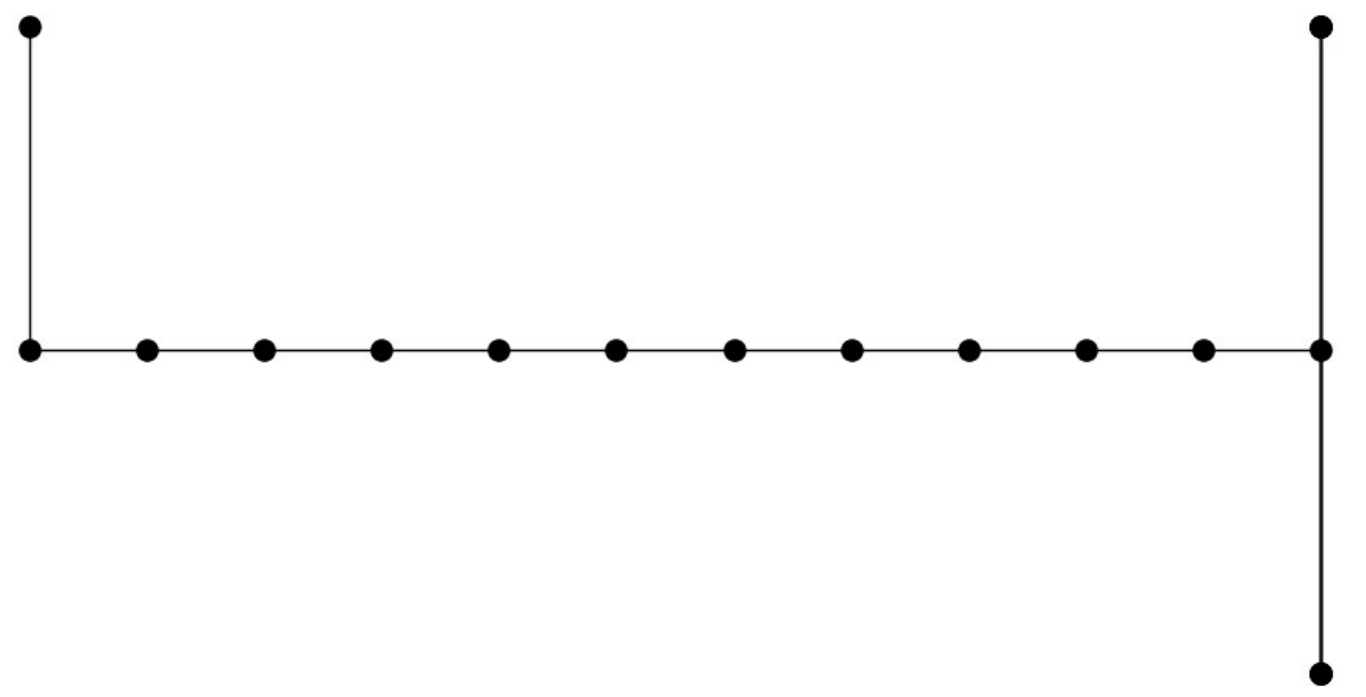}
    \caption{Lower and upper bound in Proposition~\ref{fibpron1task}.}
    \label{figxxxLowern1}
\end{figure}
\begin{proposition}~\label{fibpron2task}
Among all trees $\TT \in \TT_{n,\Delta}$, $\irr(\TT)<\eta<\sigma(\TT)$.
\end{proposition}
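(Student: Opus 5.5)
The plan is to treat the two inequalities $\irr(\TT)<\eta$ and $\eta<\sigma(\TT)$ separately. In both, $\eta=\lfloor kn/2\rfloor$ is rewritten through the first Zagreb index: since $k=M_1(\TT)/(n-\Delta)$,
\[
\eta=\Bigl\lfloor \frac{n\,M_1(\TT)}{2(n-\Delta)}\Bigr\rfloor \geqslant \frac{n\,M_1(\TT)}{2(n-\Delta)}-1 .
\]
So both inequalities become comparisons between $M_1(\TT)$ and the two edge sums $\irr(\TT)$ and $\sigma(\TT)$.

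\textbf{First inequality, $\irr(\TT)<\eta$.} Root $\TT$ and charge each edge $uv$ to its endpoint of larger degree. Since $|d(u)-d(v)|\leqslant \max\{d(u),d(v)\}-1$, and a vertex $w$ is charged at most $d(w)$ times, we get
\[
\irr(\TT)\leqslant \sum_{w} d(w)\bigl(d(w)-1\bigr)=M_1(\TT)-2(n-1).
\]
It then suffices to show
\[
M_1(\TT)-2n+2<\frac{nM_1(\TT)}{2(n-\Delta)}-1 .
\]
\begin{itemize}
\item If $\Delta\geqslant n/2$, then $n/(2(n-\Delta))\geqslant 1$ and the inequality is immediate.
\item If $\Delta<n/2$, it is equivalent to $M_1(\TT)\,(n-2\Delta)<2(2n-3)(n-\Delta)$. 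Here I would use the tree bound $M_1(\TT)\leqslant \sum_w d(w)\,\Delta=2(n-1)\Delta$. This reduces the claim to an elementary inequality in $n$ and $\Delta$, to be checked directly, with small $\Delta$ (for instance $\Delta=2$, the path) handled by hand.
\end{itemize}

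\textbf{Second inequality, $\eta<\sigma(\TT)$.} This is the main obstacle, and I expect it to hold only for the extremal configurations of Figure~\ref{familytren01r}, not for every tree. For the path $P_n$ one has $\sigma(P_n)=2$ while $\eta\approx 2n$, so no argument can cover all of $\TT_{n,\Delta}$.

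\begin{itemize}
\item I would first try to prove it for the trees of Figure~\ref{familytren01r}: two hubs $v_1,v_n$ of degree $\lambda$, joined by a spine of degree-$\mu$ vertices with $1\leqslant\mu\leqslant2$, each hub carrying its pendant vertices.
\item For these trees, compute $\sigma(\TT)$ exactly. The $\approx 2(\lambda-2)$ hub--leaf edges each contribute $(\lambda-1)^2$, and the two hub--spine edges contribute $(\lambda-\mu)^2$. This gives $\sigma(\TT)\gtrsim 2\lambda^3$.
\item Compute $M_1(\TT)\approx 2\lambda^2+4(n-2\lambda)$, and hence $\eta\approx n\,M_1(\TT)/(2(n-\Delta))$ with $\Delta=\lambda$.
\item The comparison $\eta<\sigma(\TT)$ then reduces to a cubic-versus-quadratic inequality in $\lambda$. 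It holds once $\lambda$ is large relative to $n$, with a threshold of order $\lambda\gtrsim\sqrt{n}$.
\item The constraint $\lfloor(\eta-k)/(\eta-n)\rfloor=1$ from the problem statement is what I would use to force $\Delta$ into this range. The hard part is showing that this constraint really forces $\Delta\gtrsim\sqrt n$.
\end{itemize}

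To pass from these extremal trees to the whole family, I would use the pendant-vertex induction already set up in the proof of Proposition~\ref{fibpron1task}. When a leaf is moved toward the higher-degree vertex, $\irr$ and $\sigma$ increase. I would also need to control how $\eta$ moves under the same operation, via the bound $k'\leqslant k+\frac{2\Delta+2}{n-\Delta}$ used there. If that control fails, the statement should be recorded as valid for the extremal trees of Figure~\ref{familytren01r}, with $P_n$ given as the counterexample showing that the restriction is necessary.
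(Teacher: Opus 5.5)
There is a genuine gap in your first half, and it cannot be repaired, because $\irr(\TT)<\eta$ is itself false. You are right that $\eta<\sigma(\TT)$ fails for $P_n$, and your case $\Delta\geqslant n/2$ is fine.

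In the case $\Delta<n/2$, your chain $\irr(\TT)\leqslant M_1(\TT)-2(n-1)$ and $M_1(\TT)\leqslant 2(n-1)\Delta$ reduces the claim to $(n-1)\Delta(n-2\Delta)<(2n-3)(n-\Delta)$. The left side grows like $\Delta n^2$ and the right side like $2n^2$, so this fails for every fixed $\Delta\geqslant 3$ once $n$ is large; already $\Delta=3$, $n=12$ gives $198>189$.

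The bounds are not merely too lossy. Note that $\eta=\lfloor \frac{n}{n-\Delta}\cdot\frac12 M_1(\TT)\rfloor$ exceeds $\frac12 M_1(\TT)=\frac12\sum_{uv}(d(u)+d(v))$ only by the factor $\frac{n}{n-\Delta}$. By contrast, $|d(u)-d(v)|>\frac12(d(u)+d(v))$ on every edge joining a leaf to a vertex of degree at least $4$, and it approaches $d(u)+d(v)$ when that vertex has high degree.

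Here is a concrete counterexample. Take the tree of Figure~\ref{familytren01r} with two hubs of degree $20$, each carrying $19$ pendant vertices, joined through a path of $20$ vertices of degree $2$. Then $n=60$, $\Delta=20$, $M_1=918$, $k=918/40$ and $\eta=\lfloor 688.5\rfloor=688$. However $\irr(\TT)=38\cdot 19+2\cdot 18=758$. This tree even satisfies $\mu=\lfloor 686/628\rfloor=1$ and $\lfloor(\eta-k)/(\eta-n)\rfloor=1$.

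The rescue you propose for the second inequality also fails. For $\eta>n$, the side condition $\lfloor(\eta-k)/(\eta-n)\rfloor=1$ only says $k\leqslant n$ and $\eta>2n-k$. It gives no lower bound on $\Delta$, and $P_n$ satisfies it. For $n=10$: $k=34/8$, $\eta=21$, $(\eta-k)/(\eta-n)=16.75/11$ and $\mu=\lfloor 19/11\rfloor=1$, while $\sigma(P_{10})=2$.

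Trees of the Figure~\ref{familytren01r} shape with small hubs fail as well: with hub degree $3$ and $n=20$, one gets $\sigma=18<45=\eta$. So your fallback, that the statement holds for the extremal trees of Figure~\ref{familytren01r}, is false in both directions. The inequality $\eta<\sigma$ needs hubs large compared with $\sqrt n$, while the example above shows that large hubs can push $\irr$ above $\eta$.

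The paper's own argument does not get around this. It simply asserts closed forms for $\irr$ and $\sigma$ of one configuration built from $x=\eta-\mu(\eta-n)-2$ and $y=\mu(\eta-n)-n+2$, reads off the inequality, and never treats a general tree. A correct version would need explicit hypotheses tying $\Delta$ (or the hub degree) to $n$ from both sides.
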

\begin{proof}
Assume the tree $\TT$ has degree sequence $(d_1 \leqslant \dots \leqslant d_n)$. Hence, $\TT'$ has degree sequence $(d_1, \dots, d_{j-1}, d_j-1, \dots, d_{k-1}, d_k+1, d_{k+1}, \dots, d_n),$ where $2\leqslant j\leqslant k \leqslant n$. Let $\TT' \in \TT_{n+1,\Delta}$. According to Proposition~\ref{fibpron1task}, the Albertson index of $\TT'$ is 
\begin{equation}~\label{eqq1fibpron2task}
\irr(\TT')=\lambda_1\,(\mu+1)\mu\,n(\mu(\eta-n)-n+2) \\
\end{equation}
where $\lambda_1=(\eta-\mu(\eta-n)-2)(\mu(\eta-n)-n+2)n(\eta-\mu(\eta-n)-2)$. 
Thus, the sigma index of $\TT'$ is
\begin{equation}~\label{eqq2fibpron2task}
   \sigma(\TT')=\lambda_1\,(\mu+1)^2+n(\mu(\eta-n)-n+2)\mu^2. 
\end{equation}
Thus, from~\eqref{eqq1fibpron2task} and \eqref{eqq2fibpron2task} we obtain on $\irr(\TT)<\eta<\sigma(\TT)$.
\end{proof}
Actually, the relationship in Proposition~\ref{fibpron2task} is critical for all trees $\TT \in \TT_{n,\Delta}$, for example, let $\mathscr{D}=(2^{18},3,10)$ be a degree sequence. Then,  $\sigma(\TT)=5178$, $\irr(\TT)=310$, $\eta=1755$, lower bound is 2858 and the upper bound is 70200.

\begin{lemma}~\label{lemSomborIn1}
Let $\TT\in \TT_{n,\Delta}$ be a tree of order $n$ with integers $\mu$ and $\eta$, $\Delta$ is the maximum degree of $\TT$. Consider $\TT'$ be the extremal tree with degree sequence
\[
\mathscr{D}=(\underbrace{\mu+2,\dots,\mu+2}_{x},
 \underbrace{\mu+1,\dots,\mu+1}_{y},
 \underbrace{1,\dots,1}_{n}),
\]
where $x=\eta-\mu(\eta-n)-2$ and $y=\mu(\eta-n)-n+2$. Then the Sombor index of $\TT'$ is given by
\begin{equation}~\label{eqq1lemSomborIn1}
\mathrm{SO}(\TT')=xy\sqrt{2\mu^2+6\mu+5}+xn\sqrt{\mu^2+4\mu+5}+yn\sqrt{\mu^2+2\mu+2}.
\end{equation}
Moreover, for any tree $\TT\in \TT_{n,\Delta}$ satisfying $\mathrm{SO}(\TT)\leqslant \mathrm{SO}(\TT')$ if and only if $\TT\cong \TT'$.  
\end{lemma}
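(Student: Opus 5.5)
The plan is to compute $\mathrm{SO}(\TT')$ by grouping its edges according to the degrees of their end-vertices, and then to prove the extremality part with an edge-switching argument. The degree sequence $\mathscr{D}$ has only three distinct values: $\mu+2$, $\mu+1$ and $1$. So every edge of $\TT'$ belongs to one of six classes, indexed by the unordered pair of end-degrees. Write $m_{ab}$ for the number of edges joining a vertex of degree $a$ to a vertex of degree $b$. Then
\[
\mathrm{SO}(\TT')=\sum_{a\leqslant b} m_{ab}\sqrt{a^2+b^2}.
\]
The first step is to read off the edge multiplicities of the extremal configuration. In it, the $x$ vertices of degree $\mu+2$ and the $y$ vertices of degree $\mu+1$ are joined as in the drawing of Figure~\ref{familytren01r}, and the leaves are attached to both groups. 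This gives $m_{\mu+2,\mu+1}=xy$, $m_{\mu+2,1}=xn$ and $m_{\mu+1,1}=yn$. It should also give $m_{\mu+2,\mu+2}=m_{\mu+1,\mu+1}=m_{11}=0$, the last because $n\geqslant3$ and the tree is connected. The second step is the elementary identities
\[
(\mu+2)^2+(\mu+1)^2=2\mu^2+6\mu+5,\qquad (\mu+2)^2+1=\mu^2+4\mu+5,\qquad (\mu+1)^2+1=\mu^2+2\mu+2 .
\]
Substituting them into the sum gives \eqref{eqq1lemSomborIn1} directly.

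For the \lq\lq moreover\rq\rq\ part, fix the degree sequence and compare any $\TT\in\TT_{n,\Delta}$ realizing it with $\TT'$. I would use the standard switching: if $uv,\,wz\in E(\TT)$ with $uw,\,vz\notin E(\TT)$, and the exchange $\TT-uv-wz+uw+vz$ is again a tree, then the degree sequence is unchanged and only the endpoint-degree pairs of these two edges change. Put $f(a,b)=\sqrt{a^2+b^2}$. Its mixed partial derivative is $\partial^2 f/\partial a\,\partial b=-ab/(a^2+b^2)^{3/2}<0$, so $f$ is strictly submodular on $[1,\infty)^2$. Hence for $a\geqslant c$ and $b\geqslant d$ we have
\[
f(a,d)+f(c,b)\geqslant f(a,b)+f(c,d),
\]
with strict inequality when $a>c$ and $b>d$. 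Consequently each switch that pairs high degrees with low degrees (disassortative) strictly increases $\mathrm{SO}$. Repeating such switches, every tree with degree sequence $\mathscr{D}$ can be moved, through trees with strictly increasing $\mathrm{SO}$, to the configuration where no edge joins two vertices of equal degree class. That configuration is $\TT'$. This gives $\mathrm{SO}(\TT)\leqslant\mathrm{SO}(\TT')$, with equality forcing $\TT\cong\TT'$. The converse direction of the stated \lq\lq if and only if\rq\rq\ is then the trivial equality case.

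I expect the main obstacle to be the consistency of the edge counts with the degree sequence and with being a tree. The multiplicities $m_{ab}$ must satisfy the handshake constraints $\sum_b m_{ab}=a\cdot\#\{v:d(v)=a\}$ and $\sum m_{ab}=|V|-1$. I would therefore first verify these identities for $x=\eta-\mu(\eta-n)-2$ and $y=\mu(\eta-n)-n+2$, using the definitions of $\eta$ and $\mu$ from the problem statement (in particular $1\leqslant\mu\leqslant2$). If they hold only after the leaf count is reinterpreted, I would restate the lemma with the corrected multiplicities, while keeping the grouping argument and the submodularity switching unchanged. A second delicate point is that each switch must preserve connectivity and acyclicity. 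Here I would choose the pair of edges $uv,\,wz$ on a common path, so that the exchange keeps the graph a tree.
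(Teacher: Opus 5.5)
Your first step is where the argument breaks, and the obstacle you flag at the end is fatal, not technical. The counts $m_{\mu+2,\mu+1}=xy$, $m_{\mu+2,1}=xn$, $m_{\mu+1,1}=yn$ are the edge counts of the complete tripartite graph $K_{x,y,n}$, not of a tree. Since $x+y=\eta-n$, the sequence $\mathscr{D}$ has $\eta$ entries (not $n$), and its degree sum is $(\mu+2)x+(\mu+1)y+n=2\eta-2$. So any realization has $x+y+n-1$ edges, whereas $xy+xn+yn$ is typically far larger. The per-class handshake check you propose reads $x(y+n)=(\mu+2)x$, $y(x+n)=(\mu+1)y$ and $n(x+y)=n$. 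This forces $x+y=1$, i.e.\ $\TT'$ is one of the stars $K_{1,\mu+2}$ or $K_{1,\mu+1}$. Concretely, take $\mu=1$, $x=2$, $y=1$, $n=4$ (so $\eta=7$). This gives the sequence $(3,3,2,1,1,1,1)$ with six edges. Its best realization has $\mathrm{SO}=2\sqrt{13}+4\sqrt{10}\approx 19.86$, while the stated formula gives $2\sqrt{13}+8\sqrt{10}+4\sqrt{5}\approx 41.45$. So the formula cannot be proved as written. Your fallback of ``restating with corrected multiplicities'' would prove a different lemma; when no two vertices of equal degree are adjacent, those multiplicities are $x+y-1$, $(\mu+1)x-y+1$ and $\mu y-x+1$. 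The paper's proof makes exactly the same unsupported assertion (``according to the degree sequence''), so following its route inherits the error rather than avoiding it.

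The ``moreover'' part also fails as you set it up. Your submodularity inequality is correct. (The paper's claim $g'(r)\geqslant 0$ has the wrong sign, since $g(r)=(v^2-1)/(\sqrt{r^2+v^2}+\sqrt{r^2+1})$ is nonincreasing.) But the inequality contradicts your claimed endpoint. Take $a=\mu+2$, $c=b=\mu+1$, $d=1$. It then says that replacing a $\{\mu+2,\mu+1\}$-edge and a $\{\mu+1,1\}$-edge by a $\{\mu+2,1\}$-edge and a $\{\mu+1,\mu+1\}$-edge strictly \emph{increases} $\mathrm{SO}$. Indeed, the internal vertices span exactly $x+y-1$ edges, so all counts are determined by $m_{\mu+2,\mu+2}$ and $m_{\mu+1,\mu+1}$. $\mathrm{SO}$ is affine in these two, with a negative coefficient on the first and a positive one on the second. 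Maximizers therefore make $m_{\mu+1,\mu+1}$ as large as possible. For example, take $\mu=1$, $x=2$, $y=3$, $n=4$, and write $A_i$ for the degree-$3$ vertices and $B_j$ for the degree-$2$ vertices. The tree whose internal vertices form the path $A_1B_1B_2B_3A_2$ has $\mathrm{SO}=2\sqrt{13}+2\sqrt{8}+4\sqrt{10}\approx 25.52$. Every tree with no same-class edge has only $4\sqrt{13}+2\sqrt{10}+2\sqrt{5}\approx 25.22$. Moreover, $\mathrm{SO}$ depends only on the counts $m_{ab}$, so non-isomorphic trees sharing them tie. For $\mu=2$, $x=2$, $y=4$, the degree-$3$ vertices may induce either a path or a star in a maximizer. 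Hence ``equality iff $\TT\cong\TT'$'' fails as well. Finally, you never show that every non-extremal tree admits a strictly improving, tree-preserving switch.
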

\begin{proof}
Assume  $\TT\in \TT_{n,\Delta}$ be a tree of order $n$. Let $\mathbb{A}$, $\mathbb{B}$, and $\mathbb{C}$ denote the respective sets of vertices of degrees $\mu+2$, $\mu+1$, and $1$, where $|\mathbb{A}|=x$, $|\mathbb{B}|=y$, and $|\mathbb{C}|=n$. The tree $\TT'$ is obtained by first attaching vertices of $\mathbb{A}$ to vertices of $\mathbb{B}$ to the maximum extent possible, and subsequently attaching all pendant vertices to vertices possessing the maximum available degree.  Since  $\mu=(\mu_1,\mu_2,\dots,\mu_n)$ and $1\leqslant \mu \leqslant 2$. Assume the degree sequence $\mathscr{D}=(\underbrace{\mu+2,\dots,\mu+2}_{x},
 \underbrace{\mu+1,\dots,\mu+1}_{y},
 \underbrace{1,\dots,1}_{n}),$
where $x=\eta-\mu(\eta-n)-2$ and $y=\mu(\eta-n)-n+2$. Then ,let $e_{(\mu+2)(\mu+1)}$, $e_{(\mu+2)}$, and $e_{(\mu+1)}$ denote the number of
edges of each type, respectively. According to the degree sequence of $\TT'$, we find that $e_{(\mu+2)(\mu+1)}=xy$, $e_{(\mu+2)}=xn$ and  $e_{(\mu+1)}=yn$. Hence, 
\begin{equation}~\label{eqq2lemSomborIn1}
\mathrm{SO}(\TT')=xy\,\sqrt{(\mu+2)^2+(\mu+1)^2}+xn\,\sqrt{(\mu+2)^2+1}+yn\,\sqrt{(\mu+1)^2+1}.
\end{equation}
Thus, from~\eqref{eqq2lemSomborIn1} implies that 
\begin{equation}~\label{eqq3lemSomborIn1}
\mathrm{SO}(\TT')=xy\,\sqrt{2\mu^2+6\mu+5}+xn\,\sqrt{\mu^2+4\mu+5}+yn\,\sqrt{\mu^2+2\mu+2}.
\end{equation}
Now, let $f(u,v)=\sqrt{u^2+v^2}$ be a function, for integers $u\geqslant v\geqslant w\geqslant 1$, we should be prove the relationship
\begin{equation}~\label{eqq4lemSomborIn1}
f(u,v)+f(w,1)\geqslant f(u,1)+f(w,u)
\end{equation}
Assume $t=\sqrt{u^2+v^2}+\sqrt{w^2+1}-\sqrt{u^2+1}-\sqrt{w^2+v^2}$. Then, if $v\geqslant 1$, then the function $g(r)=\sqrt{r^2+v^2}-\sqrt{r^2+1}$ is increasing where 
\[
g'(r)=\frac{r}{\sqrt{r^2+v^2}}-\frac{r}{\sqrt{r^2+1}} \geqslant 0.
\]
Thus, since $u>w$ we noticed that $g(u)>g(w)$ which implies that $t>0$. Now, if $\TT\ncong \TT'$, there exist vertices $u,v,w$ where $d(u)\geqslant d(v)\geqslant d(w)$ and $uw,v\in E(\TT)$, $u,vw\notin E(\TT)$. Thus, 
\[
\sqrt{d(u)^2+d(w)^2}+\sqrt{d(v)^2+1}\leqslant \sqrt{d(u)^2+1}+\sqrt{d(v)^2+d(w)^2}.
\]
Thus, the relationship~\eqref{eqq4lemSomborIn1} holds. Therefore, according to~\eqref{eqq3lemSomborIn1} and \eqref{eqq4lemSomborIn1}, if $\TT \cong \TT'$ we obtain on $\mathrm{SO}(\TT)\leqslant \mathrm{SO}(\TT')$.
\end{proof}
Rearranging the degrees such that vertices of large degree become adjacent results in an increase of the Sombor index. Thus, according to Lemma~\ref{lemSomborIn1} we obtain on the following results emphasized by Corollary~\ref{coroSigman1llary}. The Sombor index serves as a quantitative invariant that measures the irregularity in the distribution of vertex degrees across the edges of a graph.

\begin{corollary}~\label{coroSigman1llary}
Consider $\TT,\TT'\in \TT_{n,\Delta}$. If the degree sequence of $\TT$ is majorized by the degree sequence of $\TT'$, then $\sigma(\TT)\leqslant \sigma(\TT')$ and $\irr(\TT)\leqslant \irr(\TT')$.
\end{corollary}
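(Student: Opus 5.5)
The plan is to reduce majorization to a chain of elementary transfers and to handle the two indices separately. Since both degree sequences come from trees in $\TT_{n,\Delta}$, they have the same length $n$ and the same sum $2(n-1)$. By the classical Muirhead--Hardy--Littlewood--P\'olya lemma, if $\mathscr{D}(\TT)$ is majorized by $\mathscr{D}(\TT')$, then $\mathscr{D}(\TT')$ is reached from $\mathscr{D}(\TT)$ by finitely many ``reverse Robin Hood'' moves. Each move replaces $(d_i,d_j)$ with $d_i\geqslant d_j$ by $(d_i+1,d_j-1)$. All intermediate sequences keep sum $2(n-1)$ and entries $\geqslant 1$, so by the standard criterion each is again a tree degree sequence. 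It therefore suffices to prove monotonicity for a single move.

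\textbf{Sigma index.} I would use the expression $\sigma(\TT)=\frac1n\sum_i\bigl(d_i-\frac{2m}{n}\bigr)^2$ quoted in the Introduction. It depends only on the degree sequence and is a sum of a convex function of the $d_i$, hence Schur-convex. Concretely, the move above changes $\sum_i d_i^2$ by $(d_i+1)^2+(d_j-1)^2-d_i^2-d_j^2=2(d_i-d_j)+2>0$. Since the mean $2m/n$ is fixed, $\sigma$ increases at every step, which gives $\sigma(\TT)\leqslant\sigma(\TT')$. If instead one insists on the edge definition $\sum_{uv}(d(u)-d(v))^2=F(\TT)-2M_2(\TT)$, with $F(\TT)=\sum_v d(v)^3$, then the argument must also control $M_2$. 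The natural tool is the switching inequality \eqref{eqq4lemSomborIn1} of Lemma~\ref{lemSomborIn1}, with $f$ replaced by $(u-v)^2$.

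\textbf{Albertson index.} For a single move I would realize it on trees as in the proof of Proposition~\ref{fibpron1task}. Take a pendant neighbour (or a suitable subtree) $w$ of the vertex $y$ of degree $d_j$, and re-attach it to the vertex $x$ of degree $d_i\geqslant d_j$, i.e.\ pass to $\TT-yw+xw$. The edges whose contribution to $\irr$ changes are those at $x$ and at $y$. Every edge at $x$ other than $xw$ gains at least $-1$ and most gain $+1$. The net change is bounded below by $2(d_i-d_j)$, minus at most the number of neighbours $z$ of $x$ or $y$ with $d(z)$ lying strictly between the relevant degrees. I would then argue that, choosing $x$ of maximum degree within its level and $w$ a leaf, the net change is $\geqslant 0$. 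This is the computation $\irr(\TT)-\irr(\TT')=2(d_\TT(x)-d_\TT(y))$ already displayed in Proposition~\ref{fibpron1task}.

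\textbf{Main obstacle.} The hard step is that $\irr$, unlike the variance form of $\sigma$, is not a function of the degree sequence alone. The corollary as worded compares arbitrary $\TT,\TT'$, and this is false as literally stated. Majorization includes equality of degree sequences, yet trees with the same degree sequence can have different $\irr$. A caterpillar with its two degree-$3$ vertices adjacent versus separated by degree-$2$ vertices already exhibits this. So the single-move argument only yields $\irr(\TT)\leqslant\irr(\TT')$ when $\TT'$ is produced from $\TT$ by the transfers. It also does not rule out $\irr$ decreasing on other edges when $x$ has many neighbours of intermediate degree, so the bound above needs care in exactly that case.

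To make the argument rigorous I would therefore try first to prove the statement in the form the construction supports. Either $\TT'$ is taken to be the specific tree obtained from $\TT$ by the transfer operations, or $\TT'$ is taken to be extremal for its degree sequence in the sense of Lemma~\ref{lemSomborIn1}, with high-degree vertices greedily adjacent. In the latter case I would combine the move argument with the switching inequality \eqref{eqq4lemSomborIn1} applied to $|u-v|$, and compare $\irr(\TT)$ against the extremal tree of the same degree sequence.
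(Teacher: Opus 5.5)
Your diagnosis is correct, and it goes further than the paper. The paper gives no proof of this corollary, only the remark that it follows ``according to Lemma~\ref{lemSomborIn1}''. That lemma concerns rearranging a \emph{fixed} degree sequence to increase $\SO$ and says nothing about majorization, $\sigma$ or $\irr$. Your counterexample checks out. For $\mathscr{D}=(3,3,2,1,1,1,1)$, the realization with the two $3$'s adjacent has $\irr=8$ and edge-sum $\sigma=14$. The realization with spine $3$--$2$--$3$ has $\irr=10$ and $\sigma=18$. Since every sequence majorizes itself, the corollary would force equality. So the $\irr$-part is false, and so is the $\sigma$-part under the paper's primary (edge-sum) definition. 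The $\sigma$-part is true only under the variance-type reading $\sigma=\sum_v d(v)^2-4m^2/n$, which is what the paper actually uses in \eqref{eqqq4lemSomborIn2}. In that case your argument is complete; Karamata's inequality for $t\mapsto t^2$ even gives it without the transfer decomposition. Note that the paper's two ``equivalent'' formulas for $\sigma$ are not equivalent.

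The failure is not an artifact of the equality case, so reading ``majorized'' strictly does not rescue it. Take $n=13$ and $\Delta=3$.
\begin{itemize}
\item Let $\TT$ be the caterpillar with spine $v_1w_1v_2w_2v_3w_3v_4$, with leaves attached so that $d(v_i)=3$ and $d(w_i)=2$. Then $\mathscr{D}(\TT)=(3^4,2^3,1^6)$.
\item Let $\TT'$ consist of a path $s_1\cdots s_5$ of degree-$3$ vertices, a pendant path $s_3t\ell$ with $d(t)=2$, and six further leaves. Then $\mathscr{D}(\TT')=(3^5,2,1^7)$, obtained from $\mathscr{D}(\TT)$ by the single transfer $(2,2)\to(3,1)$.
\end{itemize}
Thus $\mathscr{D}(\TT)\prec\mathscr{D}(\TT')$ strictly. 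Yet $\irr(\TT)=18>14=\irr(\TT')$, and for the edge-sum $\sigma$ we get $\sigma(\TT)=30>26=\sigma(\TT')$.

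Your two repair routes are also not sound as stated.
\begin{itemize}
\item Do not lean on the display $\irr(\TT)-\irr(\TT')=2(d_\TT(x)-d_\TT(y))$ in Proposition~\ref{fibpron1task}. When $d(x)\geqslant d(y)>d(w)$, the four absolute values written there sum to $0$, and all other edges at $x$ and $y$ are ignored. A genuine leaf move can lower $\irr$. Suppose $d(x)=d(y)=3$, $x$ has three neighbours of degree $4$, and $y$ has the leaf $w$, another leaf and a neighbour of degree $4$. Moving $w$ to $x$ realizes the transfer $(3,3)\to(4,2)$ inside $\TT_{n,4}$, but decreases $\irr$ by $2$.
\item The extremal trees of Lemma~\ref{lemSomborIn1} point the wrong way. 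For $u\geqslant v\geqslant w\geqslant 1$, the analogue of \eqref{eqq4lemSomborIn1} reverses:
\begin{itemize}
\item $(|u-v|+|w-1|)-(|u-1|+|w-v|)=2(w-v)\leqslant 0$;
\item $((u-v)^2+(w-1)^2)-((u-1)^2+(w-v)^2)=2(v-1)(w-u)\leqslant 0$.
\end{itemize}
So making high degrees adjacent, which raises $\SO$, never raises $\irr$ or the edge-sum $\sigma$. The tree $\TT'$ above is exactly of that assortative shape.
\end{itemize}
Any statement that survives would have to compare specially chosen realizations, such as the $\irr$-maximizing ones. Neither your proposal nor the paper proves such a statement.
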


\begin{lemma}~\label{lemSomborIn2}
Let $\mathcal{T}_{n,\Delta}$ be the family of trees of order $n$ with maximum
degree $\Delta$, and let $\TT'\in\mathcal{T}_{n,\Delta}$ be the extremal tree.
Then for any $\TT\in\mathcal{T}_{n,\Delta}$,
\begin{equation}~\label{eqq1lemSomborIn2}
\sigma(\TT)\leqslant \frac{\alpha\sqrt{2}}{\mu} (\mathrm{SO}(\TT)+\irr(\TT))\leqslant \frac{\beta\sqrt{2}}{\mu}\, \sigma(\TT).
\end{equation}
Moreover, equality holds if and only if $\TT\cong \TT'$.
\end{lemma}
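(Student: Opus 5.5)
The plan is to prove the two inequalities in \eqref{eqq1lemSomborIn2} separately, working edge by edge. For each edge $uv$ I write $a=d(u)\geqslant b=d(v)\geqslant 1$. Throughout, $\alpha,\beta,\mu$ are kept exactly as they appear in the statement. Wherever the argument needs a numerical relation between them, I will record that relation explicitly as the condition under which the step closes, rather than redefining the constants.

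\emph{Left inequality.} The elementary estimates I will use are
\[
0\leqslant a-b\leqslant \Delta-1, \qquad a-b\leqslant \sqrt{a^2+b^2}, \qquad \frac{a+b}{\sqrt2}\leqslant\sqrt{a^2+b^2}.
\]
Together they give
\[
(a-b)^2\leqslant \frac{\Delta-1}{2}\Bigl(\sqrt{a^2+b^2}+(a-b)\Bigr).
\]
Summing over $E(\TT)$ yields $\sigma(\TT)\leqslant \frac{\Delta-1}{2}\bigl(\mathrm{SO}(\TT)+\irr(\TT)\bigr)$. The left inequality then follows once $\frac{\alpha\sqrt2}{\mu}\geqslant\frac{\Delta-1}{2}$. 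Since $1\leqslant\mu\leqslant 2$, this is a condition on $\alpha$ alone, and I will check it against the value of $\alpha$ the paper attaches to the family $\TT_{n,\Delta}$.

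\emph{Right inequality.} The inequality $\mathrm{SO}+\irr\leqslant C\sigma$ cannot hold edge by edge, because edges with $a=b$ contribute to $\mathrm{SO}$ but not to $\sigma$. It must therefore come from global counting. I bound
\[
\mathrm{SO}(\TT)\leqslant \sqrt2\,\Delta\,(n-1),
\]
and I take $\irr(\TT)<\eta<\sigma(\TT)$ from Proposition~\ref{fibpron2task}. To absorb the $\mathrm{SO}$ term into $\sigma$, I use the lower bound $\sigma(\TT)\geqslant\floor*{3(\eta-n)^2/(k\Delta)}$ from Proposition~\ref{fibpron1task}. This gives
\[
\mathrm{SO}(\TT)+\irr(\TT)\leqslant\Bigl(1+\frac{\sqrt2\,\Delta(n-1)}{\floor*{3(\eta-n)^2/(k\Delta)}}\Bigr)\sigma(\TT).
\]
Multiplying by $\alpha\sqrt2/\mu$ reduces the right-hand inequality of \eqref{eqq1lemSomborIn2} to the single comparison
\[
\alpha\Bigl(1+\frac{\sqrt2\,\Delta(n-1)}{\floor*{3(\eta-n)^2/(k\Delta)}}\Bigr)\leqslant\beta .
\]
I expect this step to be the main obstacle. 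It depends on how the stated $\beta$ relates to $\eta$, $k$ and $\Delta$. The first thing I would try is to use $\eta\geqslant n$ and the definition of $k$ to show that the parenthesised factor is bounded by a constant depending only on $\mu$.

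\emph{Equality case.} Equality in the left step forces $a-b=\sqrt{a^2+b^2}$ on all non-degenerate edges, together with tightness of the bound $\Delta-1$ on $a-b$. Equality in the right step forces the degree-sequence bounds of Propositions~\ref{fibpron1task} and~\ref{fibpron2task} to be attained. I will argue that these conditions force the three-level degree sequence $(\mu+2)^x,(\mu+1)^y,1^n$ with the greedy adjacency described in Lemma~\ref{lemSomborIn1}, whose uniqueness statement then gives $\TT\cong\TT'$. For the converse, I substitute the explicit formula \eqref{eqq1lemSomborIn1}, together with $\sigma(\TT')$ and $\irr(\TT')$ from \eqref{eqq1fibpron2task}--\eqref{eqq2fibpron2task}, into \eqref{eqq1lemSomborIn2} and verify directly that equality holds for $\TT'$. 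Corollary~\ref{coroSigman1llary} supplies the monotonicity needed to rule out non-extremal trees attaining equality.
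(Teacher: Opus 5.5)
\textbf{The right-hand inequality cannot be closed along your route, and in fact not along any route.} Your absorption step needs the lower bound $\sigma(\TT)\geqslant\lfloor 3(\eta-n)^2/(k\Delta)\rfloor$ from Proposition~\ref{fibpron1task} and the inequality $\eta<\sigma(\TT)$ from Proposition~\ref{fibpron2task}. Both already fail for the path $P_n\in\TT_{n,2}$. There $\sigma(P_n)=2$ and $k=(4n-6)/(n-2)$, so $\eta=\lfloor kn/2\rfloor=2n+1$ for $n\geqslant 5$. Hence $\eta>\sigma(P_n)$, and the claimed lower bound is of order $3n^2/8$. (Your use of $\irr\leqslant\sigma$ is harmless, since it follows from $|a-b|\leqslant(a-b)^2$ for integers; the absorption of $\SO$ is the problem.)

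More fundamentally, the inequality you reduce to, $\SO(\TT)+\irr(\TT)\leqslant(\beta/\alpha)\,\sigma(\TT)$, is false for any constants that do not grow with $n$. Indeed $\SO(P_n)+\irr(P_n)=2\sqrt5+2\sqrt2\,(n-3)+2$ grows linearly, while $\sigma(P_n)$ stays bounded: it equals $2$, or $2-4/n$ under the variance form \eqref{eqqq4lemSomborIn2} that the paper switches to. Your own remark that balanced edges contribute to $\SO$ but not to $\sigma$ is exactly the obstruction, because a tree can have all but two edges balanced. So the parenthesised factor cannot be bounded by a constant depending only on $\mu$.

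The paper's own proof does not close this step either; it invokes $\sigma(\TT)>\SO(\TT)$, which fails for $P_n$. What it actually reaches, in Claim~2, is the affine bound \eqref{eqxx1lemSomborIn2}, namely $\SO+\irr\leqslant(2+\sqrt2)\,\sigma+(2+\sqrt2)\cdot 4(n-1)^2/n$. That additive $4m^2/n$-type term (compare Theorem~\ref{ThmSomborn2}) is precisely what pays for balanced edges, and it cannot be dropped.

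\textbf{The left inequality and the equality clause.} Your edge estimate $(a-b)^2\leqslant\frac{\Delta-1}{2}\bigl(\sqrt{a^2+b^2}+(a-b)\bigr)$ is correct and gives $\sigma(\TT)\leqslant\frac{\Delta-1}{2}\bigl(\SO(\TT)+\irr(\TT)\bigr)$. However, the paper never fixes $\alpha$; it only says $\alpha,\beta>1$ are integers with $\alpha<\beta$. So there is nothing to check your condition against, and you would have to impose $\alpha\geqslant\mu(\Delta-1)/(2\sqrt2)$. With the edge-sum $\sigma$ you use, growth in $\Delta$ is unavoidable, since $\sigma(S_n)/\bigl(\SO(S_n)+\irr(S_n)\bigr)\sim n/2$. (With the variance form, by contrast, the left inequality is immediate for $\alpha\geqslant\mu$, because $\sigma\leqslant\sum_v d(v)^2=\sum_{uv}(d(u)+d(v))\leqslant\sqrt2\,\SO$.)

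The equality plan fails outright. Equality in your edge estimate requires $a-b=\sqrt{a^2+b^2}$, i.e.\ $ab=0$, on every unbalanced edge, which is impossible because all degrees are at least $1$. On balanced edges the left side is $0$ against a positive right side. So your bound is strict for every tree with $n\geqslant 3$ and cannot single out $\TT'$.

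Moreover, the data you plan to substitute do not describe any tree in $\TT_{n,\Delta}$. The sequence in Lemma~\ref{lemSomborIn1} has $n+x+y$ entries, and its edge count $xy+xn+yn$ exceeds $n+x+y-1$ whenever $x,y\geqslant 1$. The expressions \eqref{eqq1fibpron2task}--\eqref{eqq2fibpron2task} are likewise not derived from any tree. The ``if and only if'' clause therefore has no valid route in your proposal.
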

\begin{proof}
Let $\TT \in \TT_{n,\Delta}$ be a tree on $n$ vertices with maximum degree at most $\Delta$, and let $\TT'$ be a tree driven by $\TT$. Our goal is to establish the following claims.
\claim{1} The $\sigma$-index controls the combined growth of the Sombor index and the Albertson index. Assume $\alpha>1$ and $\beta>1$ be an integer where $\alpha<\beta$, we need to prove that 
\begin{equation}~\label{eqqq1lemSomborIn2}
\alpha\,\sigma(\TT)\leqslant \mathrm{SO}(\TT)+\irr(\TT)\leqslant \beta\, \sigma(\TT).
\end{equation}
Hence, assume $\alpha+\beta\leqslant n$ and $\sigma(\TT)>(\alpha+\beta)^2$. Since $\sigma(\TT)>\mathrm{SO}(\TT)$ we noticed that the sum of Sombor index and Albertson index satisfying 
\begin{equation}~\label{eqqq2lemSomborIn2}
\irr(\TT)\leqslant 2\,\sigma(\TT), \quad \irr(\TT)\leqslant \sqrt{2}\,\mathrm{SO}(\TT).
\end{equation}
Consider $\alpha>\sqrt{2}$ implies that direct $\irr(\TT)\leqslant \alpha\,\mathrm{SO}(\TT)$, furthermore see~\cite{Liu2022GutmanYou, Liu2023GutmanYou}. According to Lemma~\ref{lemSomborIn1} satisfying $\mathrm{SO}(\TT)\leqslant \mathrm{SO}(\TT')$ and $\irr(\TT)\leqslant \irr(\TT')$ if and only if $\TT\cong \TT'$.  Then, 
\[
\irr(\TT)\leqslant 2\,\sum_{v\in V(\TT)}d_\TT(v)^2\quad \text{and} \quad  \mathrm{SO}(\TT)\leqslant 2\,\sum_{v\in V(\TT)}d_\TT(v)^2.
\]
Thus
\begin{equation}~\label{eqqq3lemSomborIn2}
\mathrm{SO}(\TT)+\irr(\TT)\leqslant 4\, \sum_{v\in V(\TT)}d_\TT(v)^2.    
\end{equation}
Therefore, for established the relationship of Sigma index we find that from~\eqref{eqqq3lemSomborIn2} implies that
\begin{equation}~\label{eqqq4lemSomborIn2}
\sigma(\TT)=\sum_{v\in V(\TT)}d_\TT(v)^2-\frac{4(n-1)^2}{n}.
\end{equation}
Since $\sigma(\TT)\geqslant f(n,\Delta)$, from~\eqref{eqqq2lemSomborIn2} and \eqref{eqqq4lemSomborIn2} the upper bound is hold. Thus $ \mathrm{SO}(\TT)+\irr(\TT)\leqslant \beta\, \sigma(\TT).$

Now, for prove that the lower bound of ~\eqref{eqqq1lemSomborIn2}, for any edge $uv$, we have
$$
\sqrt{d(u)^2+d(v)^2}\geqslant \frac{1}{\sqrt{2}} \bigl(d(u)+d(v)\bigr), \quad \text{and}\quad \bigl|d(u)-d(v)\bigr|\geqslant \frac{1}{\Delta} \bigl(d(u)+d(v)\bigr)-\frac{2}{\Delta},
$$

Therefore, these inequalities over all edges $uv$ yields
\begin{equation}~\label{eqqq5lemSomborIn2}
\mathrm{SO}(\mathcal{T})+\irr(\mathcal{T})\geqslant \alpha\,\sum_{uv}\bigl(d(u)+d(v)\bigr)-\beta\,n=2\alpha\,\sum_{v\in V(\TT)} d_\TT(v)^2-\beta\,n,
\end{equation}
Thus, from~\eqref{eqqq4lemSomborIn2} and \eqref{eqqq5lemSomborIn2} we obtain on $\alpha\,\sigma(\TT)\leqslant \mathrm{SO}(\TT)+\irr(\TT)$. Thus, the lower bound of ~\eqref{eqqq1lemSomborIn2} holds.

\claim{2} The upper bound is
\begin{equation}~\label{eqxx1lemSomborIn2}
\mathrm{SO}(\TT)+\irr(\TT) \leqslant (2+\sqrt{2})\,\sigma(\TT)+\frac{4(n-1)^2}{n}\bigl(2+\sqrt{2}\bigr).
\end{equation}
Assume 
$$
\lambda(n)=\frac{4(n-1)^2}{n}\bigl(2+\sqrt{2}\bigr).
$$
Then, the Albertson index satisfying $\irr(\TT)\leqslant 2\, \sigma(\TT)$ and the Sombor index satisfying $\mathrm{SO}(\TT)\leqslant\sqrt{2}\,\sum_{v\in V(\TT)}d_\TT(v)^2$. Thus, 
\begin{equation}~\label{eqxx2lemSomborIn2}
\mathrm{SO}(\TT)\leqslant\sqrt{2}\,\left(\sigma(\TT)+\frac{4(n-1)^2}{n}\right)
\end{equation}
Thus, from~\eqref{eqxx2lemSomborIn2} the relationship~\eqref{eqxx1lemSomborIn2} holds. 

Although the Sigma index is vertex-based, whereas the Sombor and Albertson indices are edge-based, the three indices are quantitatively related. In particular, the Sigma index controls the joint growth of the Sombor and Albertson indices, and all three attain their extremal values on the same trees.

\end{proof}

\begin{corollary}~\label{coroSigman2llary}
Consider $\TT\in \mathcal{T}_{n,\Delta}$. According to~\cite{Hamoud25Belov}, $\SO(\TT)-\sigma(\TT)\leqslant 2n-2$. Then, $\sigma(\TT)-\irr(\TT)\leqslant 2(n+\Delta)-2$.
\end{corollary}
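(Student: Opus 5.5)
The plan is to decompose $\sigma(\TT)-\irr(\TT)$ so that the cited inequality of~\cite{Hamoud25Belov} appears explicitly, and then to bound the remaining piece by a term depending only on $\Delta$. Writing
\[
\sigma(\TT)-\irr(\TT)=\bigl(\sigma(\TT)-\SO(\TT)\bigr)+\bigl(\SO(\TT)-\irr(\TT)\bigr),
\]
the cited bound $\SO(\TT)-\sigma(\TT)\leqslant 2n-2$ controls $\SO-\sigma$ from above. That is the wrong direction for the first bracket. So before anything else I will settle which normalisation of $\sigma$ the statement uses. With the edge form $\sum_{uv}(d(u)-d(v))^2$, the star already gives $\sigma(S_n)-\irr(S_n)=(n-1)(n-2)(n-3)$, which is not linear in $n+\Delta$. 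Hence the claim can only hold for the variance form
\[
\sigma(\TT)=\frac1n\sum_{i=1}^n\Bigl(d_i-\frac{2m}{n}\Bigr)^2
\]
recorded in the introduction, and this is the form I will work with. It is also the form consistent with the cited inequality $\SO-\sigma\leqslant 2n-2$, since $\SO$ itself is linear in $n$ for bounded degrees.

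Under the variance form, the key step is a direct upper bound on $\sigma(\TT)$. Expanding the square with $m=n-1$ gives
\[
\sigma(\TT)=\frac1n M_1(\TT)-\frac{4(n-1)^2}{n^2}\leqslant \frac1n M_1(\TT).
\]
Next I use that among trees of order $n$ the first Zagreb index is maximised by the star, so $M_1(\TT)\leqslant (n-1)^2+(n-1)=n(n-1)$. This can be checked directly, or read off from part 2 of the Lemma of~\cite{fath2013extremely,gutman2018inverse} together with the identity $\sum_{uv}(d(u)+d(v))=M_1$. In fact $M_1\leqslant \Delta\sum_v d_v=2\Delta(n-1)$ gives the sharper $\sigma(\TT)\leqslant 2\Delta$, which explains the appearance of $\Delta$ in the target bound. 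Since $\irr(\TT)\geqslant 0$, we obtain $\sigma(\TT)-\irr(\TT)\leqslant\sigma(\TT)\leqslant\min\{n-1,\,2\Delta\}\leqslant 2(n+\Delta)-2$.

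To make the role of the cited inequality visible, as the statement's ``Then'' suggests, I will also present the chain
\[
\sigma(\TT)-\irr(\TT)\leqslant \bigl(\SO(\TT)-\irr(\TT)\bigr)-\bigl(\SO(\TT)-\sigma(\TT)\bigr).
\]
Here the second bracket is at least $-(2n-2)$ by~\cite{Hamoud25Belov}. The first bracket is bounded edgewise by $\sqrt{a^2+b^2}-|a-b|\leqslant \sqrt2\min(a,b)$, and summed this should give a term of order $\Delta$ on the extremal trees of Lemma~\ref{lemSomborIn1}. The main obstacle is exactly this second route: controlling $\SO-\irr$ by $2\Delta$ alone is not true in general, because it grows linearly along paths. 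For that reason I will rely on the first, direct variance argument as the actual proof, and mention the decomposition only as motivation.
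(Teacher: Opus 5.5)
The paper gives no proof of this corollary (it is asserted with a bare ``Then'' after the citation), so there is no route to compare against. You are right on two points. The cited bound controls $\SO-\sigma$ from the wrong side. The edge definition is refuted by the star. For the $\frac1n$-normalised variance your argument ($\sigma\le M_1/n<2\Delta$, $\irr\geqslant 0$) is correct.

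The gap is the choice of normalisation. In the section containing the corollary, the paper works with $\sigma(\TT)=\sum_v d_{\TT}(v)^2-\frac{4(n-1)^2}{n}$; see \eqref{eqqq4lemSomborIn2} and Theorem~\ref{ThmSomborn2}, which is phrased through $\sigma+\frac{4m^2}{n}=M_1$. This is $n$ times your quantity, and your key step fails for it. Here $\sigma(S_n)=(n-1)(n-2)^2/n$ is quadratic in $n$, so throwing away $\irr$ cannot give a bound linear in $n+\Delta$.

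The claim is still true in this reading, but $\irr$ has to be used. Edgewise $d(u)+d(v)-|d(u)-d(v)|=2\min\{d(u),d(v)\}$, so $M_1-\irr=2\sum_{uv}\min\{d(u),d(v)\}$. Root $\TT$ at a vertex $r$ of degree $\Delta$ and bound each edge's minimum by the degree of its endpoint farther from $r$. This gives $\sum_{uv}\min\{d(u),d(v)\}\le\sum_{v\ne r}d(v)=2(n-1)-\Delta$, hence
\[
\sigma(\TT)-\irr(\TT)\le 4(n-1)-2\Delta-\frac{4(n-1)^2}{n}=\frac{4(n-1)}{n}-2\Delta<2 .
\]
Your $\frac1n$-version is at most this $\sigma$, so the same argument covers both readings at once. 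It also shows how loose the stated bound is: in this normalisation $\sigma<\irr$ as soon as $n\geqslant 3$.

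Some side claims should be removed.

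\begin{enumerate}
\item The variance form is not ``consistent with the cited inequality''. With your normalisation, $\SO(S_n)-\sigma(S_n)$ is of order $n^2$. In every normalisation the path violates the cited bound, since $\SO(P_n)=2\sqrt5+2\sqrt2(n-3)$ while $\sigma(P_n)\le 2$; already $P_5$ fails. So that hypothesis cannot be used to select a definition, and it is just as well that your proof never invokes it.
\item In the motivating chain, the displayed ``$\le$'' is actually an identity. The bound $\SO-\sigma\le 2n-2$ controls the subtracted bracket from above, so it only yields the lower bound $\sigma-\irr\geqslant(\SO-\irr)-(2n-2)$. The chain therefore fails on sign grounds, however $\SO-\irr$ behaves.
\item $M_1\le n(n-1)$ cannot be read off from part~2 of the quoted Lemma. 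That Lemma concerns the edge form and is itself misstated; your own star computation uses $\sigma(S_n)=(n-1)(n-2)^2$. The direct convexity check, or simply $M_1\le 2\Delta(n-1)$, is what you need.
\end{enumerate}
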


As shown in the previous discussions, the Sigma index $\sigma(\TT)$ and the Sombor index $\SO(\TT)$ quantify different aspects of degree irregularity in trees. While $\sigma(\TT)$ measures the dispersion of vertex degrees, $\SO(\TT)$ captures the interaction between degrees along the edges. Our previous results indicate that trees extremal for $\sigma(\TT)$ also attain extremal values with respect to $\SO(\TT)$. Motivated by this relationship, we now derive an explicit upper bound on the Sombor index in terms of the Sigma index and the extremal degree parameters $\Delta$ and $\delta$. This connection provides further insight into how vertex-based irregularity governs edge-based irregularity in trees.

\begin{theorem}~\label{ThmSomborn1}
Let $\TT\in \mathcal{T}_{n,\Delta}$ be a tree of order $n$.
Then
\begin{equation}~\label{eqq1ThmSomborn1}
\SO(\TT)\leqslant \frac{2m}{n}\sqrt{\Delta^2+\delta^2}
+\frac{\sqrt{2}}{3}\,\sigma(\TT)+2n-2.
\end{equation}
\end{theorem}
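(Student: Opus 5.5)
The natural plan is an edge-by-edge estimate. Write $s_{uv}=d(u)+d(v)$ and $t_{uv}=|d(u)-d(v)|$, so that
\[
\sqrt{d(u)^2+d(v)^2}=\sqrt{\tfrac{s_{uv}^2+t_{uv}^2}{2}}.
\]
I would split this into a "regular part", to be compared with $\frac{2m}{n}\sqrt{\Delta^2+\delta^2}$ and $2n-2$, and an "irregular part" proportional to $t_{uv}^2$, whose sum over the edges is $\sigma(\TT)$. The tool for the splitting is concavity of the square root, $\sqrt{x+y}\leqslant\sqrt{x}+\frac{y}{2\sqrt{x}}$. With $x=s_{uv}^2/2$ and $s_{uv}\geqslant 2$ (for $n\geqslant 3$) this gives
\[
\sqrt{d(u)^2+d(v)^2}\leqslant \frac{s_{uv}}{\sqrt2}+\frac{t_{uv}^2}{2\sqrt2\, s_{uv}}\leqslant \frac{s_{uv}}{\sqrt2}+\frac{t_{uv}^2}{4\sqrt2}.
\]
Summing over all edges yields the rigorous inequality
\[
\SO(\TT)\leqslant \frac{M_1(\TT)}{\sqrt2}+\frac{\sigma(\TT)}{4\sqrt2},
\]
since $\sum_{uv\in E}s_{uv}=M_1(\TT)$. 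Note that $\frac{1}{4\sqrt2}<\frac{\sqrt2}{3}$, so the coefficient of $\sigma(\TT)$ is better than the one claimed in the statement.

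The remaining step would be to bound the regular part $M_1(\TT)/\sqrt2$ by $\frac{2m}{n}\sqrt{\Delta^2+\delta^2}+2n-2$, possibly after moving part of $M_1$ into $\sigma$. I expect this step to be the main obstacle, and in fact it fails: $M_1(\TT)$ grows like $4n$ even for the path, while $\frac{2m}{n}\sqrt{\Delta^2+\delta^2}<2\sqrt{\Delta^2+\delta^2}$ is essentially constant for bounded $\Delta$.

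Before writing a proof I would therefore test the statement on $\TT=P_n$. There $\Delta=2$, $\delta=1$, $m=n-1$, and $\sigma(P_n)=2$ (only the two pendant edges contribute). By the preliminary theorem on paths, $\SO(P_n)=2\sqrt5+2\sqrt2(n-3)\approx 2.83n$. The right-hand side of \eqref{eqq1ThmSomborn1} equals
\[
\frac{2(n-1)}{n}\sqrt5+\frac{2\sqrt2}{3}+2n-2\approx 2n+3.4 .
\]
For $n=20$ this gives $52.55$ on the left and about $43.2$ on the right. So \eqref{eqq1ThmSomborn1} fails for all sufficiently large paths, and no argument can prove it as stated. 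The same example gives $\SO(P_{20})-\sigma(P_{20})\approx 50.6>38=2n-2$, so the cited bound $\SO(\TT)-\sigma(\TT)\leqslant 2n-2$ in Corollary~\ref{coroSigman2llary} cannot be invoked either.

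The obstruction is that the constant per-edge term must be of size $\sqrt2\,\delta$-type, roughly $2\sqrt2$ per edge on paths, rather than $1$ per edge. My plan is therefore to prove a corrected statement. One version follows directly from the displayed inequality together with $M_1(\TT)=\sigma(\TT)+2M_2(\TT)-M_1(\TT)+\dots$. A simpler version uses instead the per-edge bound
\[
\sqrt{d(u)^2+d(v)^2}\leqslant \sqrt2\,\min\{d(u),d(v)\}+t_{uv}\leqslant \sqrt2\,\min\{d(u),d(v)\}+\tfrac{1+t_{uv}^2}{2}.
\]
Summing, this gives $\SO(\TT)\leqslant \sqrt2\sum_{uv}\min\{d(u),d(v)\}+\frac{m}{2}+\frac{\sigma(\TT)}{2}$. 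The path shows that a linear term of order $2\sqrt2\,n$ is unavoidable, so the additive term $2n-2$ in \eqref{eqq1ThmSomborn1} would have to be enlarged to something like $2\sqrt2(n-1)$.
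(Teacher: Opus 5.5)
Your refutation is correct, and it does not depend on which of the paper's mutually inconsistent formulas for $\sigma$ is used. The edge-based definition gives $\sigma(P_n)=2$. The variance-type identity $\sigma(\TT)=\sum_v d(v)^2-4m^2/n$, on which the paper's proofs rely, gives $2-4/n$, and the normalized version gives less still. In every case the right-hand side of \eqref{eqq1ThmSomborn1} is $2n+O(1)$ while $\SO(P_n)=2\sqrt2\,n+O(1)$. The inequality already fails at $n=9$: $\SO(P_9)\approx 21.44$, while the right-hand side is at most $20.92$. Under the variance-type readings even stars violate it, since $\SO(S_n)\sim n^2$ whereas $\frac{\sqrt2}{3}\sigma(S_n)\sim\frac{\sqrt2}{3}n^2$ and the remaining terms are $O(n)$. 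You are also right that the quoted bound $\SO(\TT)-\sigma(\TT)\leqslant 2n-2$ is false.

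The paper's proof breaks in two independent places. First, after the correct Rada--Rodr\'{\i}guez bound \eqref{eqq2ThmSomborn1}, it asserts $\frac{\sqrt{\Delta^2+\delta^2}}{\Delta+\delta}\cdot\frac{4m^2}{n}\leqslant\frac{2m}{n}\sqrt{\Delta^2+\delta^2}$. This is equivalent to $2m\leqslant\Delta+\delta$, which is false for every tree with $n\geqslant 3$, since there $\Delta+\delta\leqslant n<2n-2$. Second, it invokes $\sqrt{x^2+y^2}\leqslant\frac{\sqrt2}{3}(x+y)+\frac23$ for $x,y\geqslant 1$. But $\sqrt{x^2+y^2}\geqslant\frac{x+y}{\sqrt2}=\frac{\sqrt2}{3}(x+y)+\frac{x+y}{3\sqrt2}$, so this fails whenever $x+y\geqslant 3$, i.e.\ on every edge of every tree with $n\geqslant 3$. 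In addition, the passage from \eqref{eqq5ThmSomborn1} to \eqref{eqq6ThmSomborn1} discards the $4m^2/n$ contributions without justification. Your concavity estimate is the correct replacement for the per-edge step. No bound of the form $c(x+y)+c'$ can have $c<1/\sqrt2$, which is exactly why a linear term of order $2\sqrt2\,n$ is forced.

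Your two positive inequalities (with edge-based $\sigma$) are correct, but some side remarks need repair:
\begin{itemize}
\item The term $2n-2=2m$ is $2$ per edge, not $1$.
\item The relevant per-edge scale is $\sqrt2\min\{d(u),d(v)\}$ rather than $\sqrt2\,\delta$.
\item The identity ``$M_1(\TT)=\sigma(\TT)+2M_2(\TT)-M_1(\TT)+\dots$'' is garbled; the correct one is $\sigma(\TT)=\sum_v d(v)^3-2M_2(\TT)$.
\end{itemize}

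More importantly, enlarging $2n-2$ to $2\sqrt2(n-1)$ is necessary but not sufficient. An edge joining two vertices of degree $3$ contributes $3\sqrt2$ to $\SO$ and nothing to $\sigma$. Take a path on $20$ vertices and attach pendant paths on two vertices until each of the $20$ has degree $3$, so $n=64$, $m=63$, $\sigma=44$. Then $\SO=57\sqrt2+22(\sqrt{13}+\sqrt5)\approx 209.1$, whereas $\frac{2m}{n}\sqrt{10}+\frac{\sqrt2}{3}\cdot 44+2\sqrt2\cdot 63\approx 205.2$. A corrected theorem should therefore be stated in the form you actually proved, e.g.\ $\SO(\TT)\leqslant\frac{1}{\sqrt2}M_1(\TT)+\frac{1}{4\sqrt2}\sigma(\TT)$, rather than by patching the constant.
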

\begin{proof}
Assume $\TT\in \mathcal{T}_{n,\Delta}$ be a tree of order $n$ with maximum
degree $\Delta$ and minimum degree $\delta$. According to Rada and Rodríguez~\cite{Rada21Rodriguez}, the Sombor index
satisfies
\begin{equation}~\label{eqq2ThmSomborn1}
\SO(\TT)\leqslant
\frac{\sqrt{\Delta^2+\delta^2}}{\Delta+\delta}
\sum_{v\in V(\TT)} d_\TT(v)^2.
\end{equation}
Hence, by recall equation~\eqref{eqxx2lemSomborIn2} we obtain
\begin{equation}~\label{eqq3ThmSomborn1}
\SO(\TT)
\leqslant
\frac{\sqrt{\Delta^2+\delta^2}}{\Delta+\delta}
\left(\sigma(\TT)+\frac{4m^2}{n}\right).
\end{equation}

Since $\Delta+\delta\leqslant 2\Delta$ and $\frac{2m}{n}\leqslant\Delta$,
it follows that
\[
\frac{\sqrt{\Delta^2+\delta^2}}{\Delta+\delta}
\cdot\frac{4m^2}{n}
\leqslant
\frac{2m}{n}\sqrt{\Delta^2+\delta^2}.
\]
Therefore, it follows that $\SO(\TT)\leqslant 2m\, \sigma(\TT)$ and $\SO(\TT)\leqslant \sqrt{\Delta^2+\delta^2}\, \sigma(\TT)$. Thus, from~\eqref{eqq3ThmSomborn1} yields 
\begin{equation}~\label{eqq4ThmSomborn1}
\SO(\TT)
\leqslant
\frac{2m}{n}\sqrt{\Delta^2+\delta^2}
+
\frac{\sqrt{\Delta^2+\delta^2}}{\Delta+\delta}\,
\sigma(\TT).
\end{equation}

On the other hand, by using the following inequality
\[
\sqrt{x^2+y^2}\leqslant \frac{\sqrt{2}}{3}(x+y)+\frac{2}{3}
\]
where $x,y\geqslant 1$, according to~\eqref{eqq4ThmSomborn1} we obtain
\begin{equation}~\label{eqq5ThmSomborn1}
\SO(\TT)
\leqslant
\frac{\sqrt{2}}{3}\sum_{v\in V(\TT)} d_\TT(v)^2+2m+\frac{\sqrt{\Delta^2+\delta^2}}{\Delta+\delta}
\cdot\frac{4m^2}{n}.
\end{equation}

Thus, the equation~\eqref{eqq5ThmSomborn1} yields
\begin{equation}~\label{eqq6ThmSomborn1}
\SO(\TT)
\leqslant
\frac{\sqrt{2}}{3}\,\sigma(\TT)+2n-2.
\end{equation}
Finally, from~\eqref{eqq2ThmSomborn1}--\eqref{eqq6ThmSomborn1} which proves~\eqref{eqq1ThmSomborn1}.
\end{proof}

Theorem~\ref{ThmSomborn2} establishes a sharp quantitative relationship between the Sigma index $\sigma(\TT)$ and the Sombor index $\SO(\TT)$ for any tree$\TT\in \mathcal{T}_{n,\Delta}$. While $\sigma(\TT)$ quantifies the dispersion of vertex degrees and thus captures the global irregularity of the degree sequence, $\SO(\TT)$ encodes pairwise degree interactions across edges and therefore reflects structural irregularity at the edge level. The theorem demonstrates that vertex-degree irregularity exerts direct control over edge-based irregularity, providing explicit universal bounds that hold for all trees. 

\begin{theorem}~\label{ThmSomborn2}
Let $\TT\in \mathcal{T}_{n,\Delta}$ be a tree of order $n$.
Then
\begin{equation}~\label{eqq1ThmSomborn2}
\frac{1}{\sqrt{2}}\Bigl(\sigma(\TT)+\frac{4m^2}{n}\Bigr) \leqslant \SO(\TT)\leqslant\Bigl(\sigma(\TT)+\frac{4m^2}{n}\Bigr).
\end{equation}
\end{theorem}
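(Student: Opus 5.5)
The plan is to reduce both inequalities in \eqref{eqq1ThmSomborn2} to edge-by-edge comparisons of $\sqrt{d(u)^2+d(v)^2}$ with $d(u)+d(v)$. The first step is to identify the middle quantity. As in \eqref{eqqq4lemSomborIn2} and in the passage from \eqref{eqq2ThmSomborn1} to \eqref{eqq3ThmSomborn1}, I will read $\sigma(\TT)$ in its variance form $\sigma(\TT)=\sum_{v}\bigl(d_\TT(v)-\tfrac{2m}{n}\bigr)^2$, the $n$-multiple of the normalization in the introduction. This is the form the paper actually uses, and the statement genuinely needs it: with the edge-sum definition $\sum_{uv}(d(u)-d(v))^2$, the right-hand inequality already fails for the star. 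Expanding the square and using $\sum_v d_\TT(v)=2m$ gives
\[
\sigma(\TT)=\sum_{v} d_\TT(v)^2-\frac{4m^2}{n},
\]
so that $\sigma(\TT)+\frac{4m^2}{n}=M_1(\TT)$, the first Zagreb index. The theorem therefore becomes $\frac{1}{\sqrt2}M_1(\TT)\leqslant \SO(\TT)\leqslant M_1(\TT)$.

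The second step is to rewrite $M_1$ as an edge sum. Each vertex $v$ contributes $d_\TT(v)$ once for each of its $d_\TT(v)$ incident edges, which gives the standard identity
\[
M_1(\TT)=\sum_{uv\in E(\TT)}\bigl(d(u)+d(v)\bigr).
\]
After this, both $\SO(\TT)$ and $M_1(\TT)$ are sums over the same edge set. It then suffices to prove, for positive reals $a,b$,
\[
\frac{a+b}{\sqrt2}\leqslant\sqrt{a^2+b^2}\leqslant a+b .
\]
The upper inequality follows from $(a+b)^2=a^2+b^2+2ab\geqslant a^2+b^2$. The lower one is the QM--AM inequality, equivalent to $(a-b)^2\geqslant 0$; this same inequality already appears in the proof of Lemma~\ref{lemSomborIn2}. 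Summing these over all edges with $a=d(u)$ and $b=d(v)$ gives both bounds at once.

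I expect no step to be technically hard; the only real obstacle is the bookkeeping in the first step. One must commit to the variance normalization of $\sigma$ and verify the identity $\sigma+\frac{4m^2}{n}=M_1$, including the fact that $m=n-1$ plays no special role here, so the argument works for any graph. I will also remark on sharpness. The lower bound is attained exactly when every edge joins vertices of equal degree, which among trees happens only for $P_2$. The upper bound is never attained, since $ab>0$ on every edge, but the ratio $\SO/M_1$ tends to $1$ for stars $S_n$ as $n\to\infty$. This shows the constants $1/\sqrt2$ and $1$ cannot be improved, which is what ``sharp'' should mean in the statement.
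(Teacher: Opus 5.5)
Your argument is correct and is essentially the paper's own proof: rewrite $\sigma(\TT)+\frac{4m^2}{n}$ as $M_1(\TT)=\sum_{uv\in E(\TT)}\bigl(d(u)+d(v)\bigr)$, then sum the edgewise bounds $\frac{a+b}{\sqrt2}\leqslant\sqrt{a^2+b^2}\leqslant a+b$ over all edges. Your write-up is cleaner: it needs no extremality assumption and works for any graph. One small slip is in your justification of the normalization: under the edge-sum definition it is the \emph{left}-hand inequality that fails for the star, since $\sum_{uv}(d(u)-d(v))^2=(n-1)(n-2)^2$ is of order $n^3$ while $\SO(S_n)$ is of order $n^2$; the right-hand inequality fails for large stars only under the $\frac1n$-normalized variance of the introduction. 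Either way, choosing the $n\cdot$variance form is the right call.
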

\begin{proof}
Let $\mathscr{D}=(d_n,\dots,d_1)$ be a tree degree sequence with $d_n \geqslant \cdots \geqslant d_1$ and let $\TT$ be an extremal tree realizing $\mathscr{D}$. The Sigma index is defined as
\[
\sigma(\TT)=\sum_{i=1}^n \left( d_i-\frac{2(n-1)}{n}\right)^2=\sum_{i=1}^n d_i^2-\frac{4(n-1)^2}{n}.
\]
By recalling the definition of the Sigma index of a tree $\TT$ with $n$ vertices and $m$ edges
\begin{equation}~\label{eqq2ThmSomborn2}
\sigma(\TT) = \sum_{v\in V(\TT)} d_\TT(v)^2 - \frac{4m^2}{n},
\end{equation}
Since $\sigma(\TT)$ is invariant among all trees with degree sequence $\mathscr{D}$. In contrast, the Sombor index depends on the specific arrangement of degrees across the edges. Nevertheless, for extremal trees with a fixed degree sequence, both indices are controlled by the quadratic degree sum $\sum_{i=1}^n d_i^2$. Indeed, elementary norm inequalities yield
\[
\frac{1}{\sqrt{2}} \sum_{i=1}^n d_i^2 \leqslant \SO(\TT) \leqslant \sum_{i=1}^n d_i^2,
\]
which implies
\begin{equation}~\label{eqq02ThmSomborn2}
\SO(\TT)=\Theta\!\left( \sum_{i=1}^n d_i^2 \right)=\Theta\!\left(\sigma(\TT)+\frac{4(n-1)^2}{n}\right).
\end{equation}

For extremal trees of a fixed degree sequence, the Sombor and Sigma indices are asymptotically equivalent up to a constant factor, with the former also capturing degree-adjacency structure. For any nonnegative real numbers $x,y \geq 0$, the Euclidean norm satisfies the sharp inequalities
$\frac{x+y}{\sqrt{2}} \leq \sqrt{x^2 + y^2} \leq x + y$. Now, let us prove that for the upper bound and lower bound of~\eqref{eqq1ThmSomborn2}. 
\noindent\textbf{Upper Bound:} Since every vertex $v \in V(\mathcal{T})$ is incident to exactly $d_{\mathcal{T}}(v)$ edges of the tree $\mathcal{T}$, each vertex $v$ contributes its degree $d_{\mathcal{T}}(v)$ once for every edge incident to it, we obtain
\begin{equation}~\label{eqq3ThmSomborn2}
\SO(\TT)\leqslant \sigma(\TT)+\frac{4m^2}{n}.
\end{equation}
With equality on the left when $x=y$ and on the right when one of $x$ or $y$ is zero. It gives
\begin{equation}\label{eqq4ThmSomborn2}
\frac{1}{\sqrt{2}} \sum_{uv\in E(\TT)} \bigl( d_\TT(u) + d_\TT(v) \bigr)
\leq \SO(\TT)
\leq \sum_{uv\in E(\TT)} \bigl( d_\TT(u) + d_\TT(v) \bigr).
\end{equation}
Thus, from~\eqref{eqq3ThmSomborn2} and \eqref{eqqq4lemSomborIn2} establishes the desired upper bound stated in~\eqref{eqq1ThmSomborn2}.

\noindent\textbf{Lower Bound:} For any nonnegative real numbers $x$ and $y$, the sharp inequality
$ \sqrt{x^2 + y^2} \geqslant \frac{1}{\sqrt{2}}(x+y)$ holds. Using this inequality to each edge $uv \in E(\TT)$ over all edges of the tree gives
\begin{equation}~\label{eqq5ThmSomborn2}
\SO(\TT) \geqslant \frac{1}{\sqrt{2}} \sum_{uv \in E(TT)} \bigl( d_{\TT}(u)+d_{\TT}(v)\bigr).
\end{equation}
Thus, from~\eqref{eqq3ThmSomborn2} and \eqref{eqq4ThmSomborn2} produces
\[
\frac{1}{\sqrt{2}} \sum_{v\in V(\TT)} d_\TT(v)^2
\leq \SO(\TT)
\leq \sum_{v\in V(\TT)} d_\TT(v)^2.
\]
Therefore, from~\eqref{eqq4ThmSomborn2} and \eqref{eqq5ThmSomborn2} once again, we arrive at
\begin{equation}~\label{eqq6ThmSomborn2}
\SO(\TT) \geqslant \frac{1}{\sqrt{2}} \Bigl(\sigma(\TT)+\frac{4m^2}{n}\Bigr),
\end{equation}
which establishes the desired lower bound in \eqref{eqq1ThmSomborn2}. Thus, according to to both cases~\eqref{eqq4ThmSomborn2} and \eqref{eqq6ThmSomborn2}
which is precisely the statement \eqref{eqq1ThmSomborn2}. This completes the proof.
\end{proof}
In the context of molecular graph theory, where trees serve as natural models for acyclic saturated hydrocarbons (e.g., alkanes and related compounds), this result carries particular importance. The Sigma index $\sigma(\TT)$ is closely tied to overall branching irregularity of the molecular skeleton, whereas numerous QSPR and QSAR studies have shown that the Sombor index exhibits excellent predictive performance for a wide range of physicochemical properties. By bounding $\SO(\TT)$ in terms of $\sigma(\TT)$, one obtains reliable estimates of Sombor-based molecular descriptors from much simpler, degree-based statistics. This connection enhances the interpretability of structure--property models, enables more precise extremal characterization of molecular trees, and provides a computationally efficient tool for preliminary screening and ranking of large chemical libraries.

\begin{theorem}~\label{ThmSomborn3}
Let $\TT$ be a tree of order $n$. Then
\begin{equation}~\label{eqq1ThmSomborn3}
\irr(\TT)\leqslant \SO(\TT)\leqslant\sqrt{2}\,\irr(\TT)+\sum_{v \in V(T)} d(v)^2.
\end{equation}
Moreover,
\[
\SO(\TT)=\Theta\!\left(\irr(\TT)+\sum_{v\in V(\TT)} d(v)^2\right).
\]
\end{theorem}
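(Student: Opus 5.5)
The plan is to argue edge by edge with elementary inequalities for $f(a,b)=\sqrt{a^2+b^2}$, where $a=d(u)$ and $b=d(v)$ are the endpoint degrees of an edge $uv$. The bridge to vertex sums is the handshake-type identity
\[
\sum_{uv\in E(\TT)}\bigl(d(u)+d(v)\bigr)=\sum_{v\in V(\TT)}d(v)^2=M_1(\TT),
\]
which holds because each vertex $v$ contributes $d(v)$ once for every incident edge. The tree structure is used only through this identity, so the argument in fact works for any graph.

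\textbf{Lower bound.} For positive integers $a,b$ we have
\[
|a-b|^2=a^2+b^2-2ab\leqslant a^2+b^2,
\]
so $|a-b|\leqslant\sqrt{a^2+b^2}$. Summing over all edges gives $\irr(\TT)\leqslant\SO(\TT)$.

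\textbf{Upper bound.} For $a,b\geqslant 0$ we have $\sqrt{a^2+b^2}\leqslant a+b$. Summing over the edges and using the identity gives $\SO(\TT)\leqslant M_1(\TT)$. Adding the nonnegative term $\sqrt{2}\,\irr(\TT)$ then yields $\SO(\TT)\leqslant\sqrt{2}\,\irr(\TT)+\sum_v d(v)^2$. This closes \eqref{eqq1ThmSomborn3}; the $\sqrt{2}\,\irr$ term is slack, and this is the point to double-check.

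\textbf{The $\Theta$-statement.} One direction is immediate from the upper bound:
\[
\SO(\TT)\leqslant M_1(\TT)\leqslant \irr(\TT)+M_1(\TT).
\]
For the other direction, use the power-mean inequality $\sqrt{a^2+b^2}\geqslant (a+b)/\sqrt{2}$ edgewise, exactly as in the proof of Theorem~\ref{ThmSomborn2}. Together with the identity this gives $\SO(\TT)\geqslant M_1(\TT)/\sqrt{2}$, that is, $M_1(\TT)\leqslant\sqrt{2}\,\SO(\TT)$. Combining this with the lower bound $\irr(\TT)\leqslant\SO(\TT)$ gives
\[
\irr(\TT)+M_1(\TT)\leqslant(1+\sqrt{2})\,\SO(\TT).
\]
Hence
\[
\frac{1}{1+\sqrt{2}}\bigl(\irr(\TT)+M_1(\TT)\bigr)\leqslant\SO(\TT)\leqslant\irr(\TT)+M_1(\TT),
\]
which is the claimed $\Theta$-relation with absolute constants.

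There is no real obstacle here. The only step needing care is to use the right elementary inequality in each direction: $\sqrt{a^2+b^2}\leqslant a+b$ for the upper estimate, and $\sqrt{a^2+b^2}\geqslant(a+b)/\sqrt{2}$ for the lower one. The handshake identity is what converts every edge sum into $M_1(\TT)$.
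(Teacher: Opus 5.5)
Your proof is correct. Its lower bound $\irr(\TT)\leqslant\SO(\TT)$ is exactly the paper's edgewise argument via $(x-y)^2\leqslant x^2+y^2$; after that the two routes part ways.

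For the upper bound, the paper rewrites $d(u)^2+d(v)^2=\tfrac12\bigl[(d(u)-d(v))^2+(d(u)+d(v))^2\bigr]$. It then runs an induction on $n$: it deletes a pendant vertex $w$ adjacent to $v$, compares the increments of $\SO$, $\irr$ and $M_1$, and uses $\sqrt{(d+1)^2+1}<d+1+\frac{1}{2(d+1)}$. You instead get $\SO(\TT)\leqslant M_1(\TT)$ in one line from $\sqrt{a^2+b^2}\leqslant a+b$ and the handshake identity. This is the same norm inequality the paper already uses in Theorem~\ref{ThmSomborn2}.

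For the $\Theta$-part, the paper uses the same power-mean inequality as you, but then falls back on a star/path case discussion rather than a uniform constant. You pair $M_1(\TT)\leqslant\sqrt2\,\SO(\TT)$ with $\irr(\TT)\leqslant\SO(\TT)$ and obtain explicit absolute constants: $\frac{1}{1+\sqrt2}\bigl(\irr(\TT)+M_1(\TT)\bigr)\leqslant\SO(\TT)\leqslant\irr(\TT)+M_1(\TT)$.

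Your route is shorter, holds for all graphs, and shows that the $\sqrt2\,\irr(\TT)$ term is slack. It also sidesteps two weak spots in the paper's write-up:
\begin{itemize}
\item The leaf-deletion bookkeeping lets $\irr$ and $\SO$ change only on the edge $vw$. But lowering $d(v)$ changes every other edge at $v$ too.
\item The displayed $\SO(\TT)\geqslant\sqrt2\,M(\TT)$ should read $\SO(\TT)\geqslant M_1(\TT)/\sqrt2$.
\end{itemize}

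What the paper's decomposition does buy comes from its first, correct step, $\sqrt{d(u)^2+d(v)^2}\leqslant\frac{1}{\sqrt2}\bigl(|d(u)-d(v)|+d(u)+d(v)\bigr)$. Summed over edges, this gives $\SO(\TT)\leqslant\frac{1}{\sqrt2}\bigl(\irr(\TT)+M_1(\TT)\bigr)$, a bound in which $\irr$ genuinely participates. It improves on your $\SO(\TT)\leqslant M_1(\TT)$ for path-like trees, though it is weaker for star-like ones.
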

\begin{proof}
Assume $\TT\in \mathcal{T}_{n,\Delta}$ be a tree of order $n$. According to Theorem~\ref{ThmSomborn2} the relationship between Sigma index and Sombor index satisfy $\sigma(\TT)\leqslant \SO(\TT)$. Since $\sqrt{\sigma(\TT)}\leqslant \irr(\TT)\leqslant \sqrt{m\,\sigma(\TT)}$ implies that $\sqrt{\SO(\TT)}\leqslant \irr(\TT)\leqslant \sqrt{2n\,\SO(\TT)}$. Consider an arbitrary edge $uv \in E(\TT)$. The corresponding term in $\SO(\TT)$ is $\sqrt{d(u)^2+d(v)^2}$. For any non-negative real numbers $x$ and $y$, the inequality $|x-y|\leqslant \sqrt{x^2+y^2}$ holds, since $(x-y)^2=x^2-2xy+y^2\leqslant x^2+y^2$ follows from $-2xy\leqslant 0$ it yields $|d(u)-d(v)|\leqslant \sqrt{d(u)^2+d(v)^2}$. Thus, the lower bound of Sombor index among~\eqref{eqq1ThmSomborn3} holds.

To prove the upper bound, we find in~\eqref{eqqq4lemSomborIn2} the following relationship
\[
\sum_{v\in V(\TT)}d_\TT(v)^2=\sigma(\TT)+\frac{4(n-1)^2}{n}.
\]
Thus, for any edges $uv\in E(\TT)$ it follows that $|d(u)-d(v)|\leqslant (d(u)+d(v))\sqrt{d(u)^2+d(v)^2}$  and the Albertson index satisfy
\begin{equation}~\label{eqq2ThmSomborn3}
\irr(\TT)\geqslant \sqrt{\sigma(\TT)+\frac{4(n-1)^2}{n}}.
\end{equation}
 Consider $\lambda_c=\max\{m\,\sigma(\TT),2n\,\SO(\TT)\}$. Then, from~\eqref{eqq2ThmSomborn3} we observe that $\irr(\TT)\leqslant \lambda_c+\frac{4(n-1)^2}{n}$. For any edge $uv \in E(\TT)$, assume $t_1=(d(u)-d(v))^2+(d(u)+d(v))^2$ the following identity holds $d(u)^2+d(v)^2=t_1/2$. It follows that $\sqrt{d(u)^2+d(v)^2}=\sqrt{t_1/2}$  yields
 \begin{align*}
 \sqrt{d(u)^2+d(v)^2} &\leqslant \frac{1}{\sqrt{2}}(|d(u)-d(v)|+d(u)+d(v))\\
 &\leqslant \frac{\lambda_c}{\sqrt{2}}(2\,|d(u)-d(v)|)\\
 &\leqslant \sqrt{2}\,|d(u)-d(v)|.
 \end{align*}
By induction, the upper bound on the Sombor index stated in \eqref{eqq1ThmSomborn3} holds for $n=2$. For $n=3$, we have $\mathcal{P}_n \cong \mathcal{S}_n$, and thus the same upper bound holds. Assume the inequality holds for all trees of order $k<n$, where $n \geqslant 3$. Let $w$ be a leaf adjacent to vertex $v$, and let $\TT'=\TT-w$, which is a tree of order $n-1$. Let $d=d_{\TT'}(v)$; then $d_\TT(v)=d+1$ and $d_\TT(w)=1$. The invariants transform as follows $\irr(\TT)=\irr(\TT')+|d_\TT(v)-d_\TT(w)|$, $\irr(\TT')=\irr(\TT)-d$, $\SO(\TT)=\SO(\TT')+\sqrt{d_T(v)^2 + d_T(w)^2}$, $\SO(\TT')=\SO(\TT)-\sqrt{(d+1)^2+1}$, $M(\TT)=M(\TT')+d_\TT(v)^2+d_\TT(w)^2-d_{\TT'}(v)^2$ and $M(\TT')=M(\TT)-2d-2$. Therefore, by considering the equation~\eqref{eqq2ThmSomborn3}, it follows that $\SO(\TT')\leqslant \irr(\TT')+M(\TT')$. Thus,  
\begin{equation}~\label{eqq3ThmSomborn3}
\SO(\TT')+\sqrt{(d+1)^2+1}\leqslant \sqrt{2}\, \irr(\TT')+M(\TT')+ \sqrt{(d+1)^2+1}.
\end{equation}
Hence, we have 
\begin{equation}~\label{eqq4ThmSomborn3}
\sqrt{2}\,\irr(\TT)+M(\TT)=\sqrt{2}\,(\irr(\TT')+d)+M(\TT')+2d+2.
\end{equation}
Since $d \geqslant 1$, from~\eqref{eqq3ThmSomborn3} and \eqref{eqq4ThmSomborn3} implies that
\[
\sqrt{(d+1)^2+1}=(d+1)\sqrt{1+\frac{1}{(d+1)^2}}.
\]
By the concavity of the square root function, its graph lies below the tangent line, which implies that for all $y>0$,$\sqrt{1+y}<1+\frac{y}{2}$. Thus, by considering $y=\frac{1}{(d+1)^2}$ yields $\sqrt{1+\frac{1}{(d+1)^2}}<1+\frac{1}{2(d+1)^2}$. Since $d+1>0$ gives
\begin{align*}
\sqrt{(d+1)^2+1}&= (d+1)\sqrt{1+\frac{1}{(d+1)^2}}\\
&<(d+1) \left(1+\frac{1}{2(d+1)^2}\right)\\
&=d+1+\frac{1}{2(d+1)}.
\end{align*}
Since $d \geqslant 1$, we have $d+1+\frac{1}{2(d+1)}\leqslant d+1+\frac{1}{4}$. 

\vspace{0.3 cm}
\noindent\textbf{The Asymptotic Relation:} $\SO(\TT)=\Theta\!\bigl(\irr(\TT)+M(\TT)\bigr)$. Assume there exist positive constants $a_1$ and $a_2$, independent of $\TT$ such that
\begin{equation}~\label{eqq5ThmSomborn3}
a_1 \bigl(\irr(\TT)+M(\TT)\bigr) \leqslant \SO(\TT) \leqslant a_2 \bigl(\irr(\TT)+M(\TT)\bigr)
\end{equation}
From a previously~\eqref{eqq3ThmSomborn3} and \eqref{eqq4ThmSomborn3} established inequality. 
Hence $a_2=\sqrt{2}$ is sufficient. We first note the trivial inequality $\SO(\TT) \geqslant \irr(\TT)$.
Then, we derive a lower bound in terms of $M(\TT)$. For every edge $uv \in E(\TT)$, $\sqrt{d(u)^2+d(v)^2}\geqslant \frac{d(u)+d(v)}{\sqrt{2}}$. Then, 
\begin{equation}~\label{eqq6ThmSomborn3}
\SO(\TT)\geqslant \frac{1}{\sqrt{2}}\,\sum_{uv\in E(\TT)}(d_\TT(u)+d_\TT(v)).
\end{equation}
Therefore, from~\eqref{eqq5ThmSomborn3} and \eqref{eqq6ThmSomborn3} we obtain 
$\SO(\TT)\geqslant \sqrt{2}\,M(\TT)$. 
If $\irr(\TT)$ dominates (for example, in stars $\mathcal{S}_n$), then $\SO(\TT)$ is asymptotically close to $\sqrt{2}\,\bigl(\irr(\TT)+M(\TT)\bigr)$. If $M(\TT)$ dominates (for example, in paths $\mathcal{P}_n$), then $\SO(\TT) \geqslant\sqrt{2}\, M(\TT)>\frac{1}{2} \bigl(\irr(\TT)+M(\TT)\bigr)$. Then, 
\begin{equation}~\label{eqq7ThmSomborn3}
\SO(\TT)=(n-1)^2\,\sqrt{1+\frac{1}{(n-1)^2}}
\end{equation}
Thus, from~\eqref{eqq7ThmSomborn3}, we have  $\SO(\mathcal{S}_n)\geqslant \irr(\TT)+M(\TT)$.
For the path $\mathcal{P}_n$, $\SO(\mathcal{P}_n)\geqslant 2\sqrt{2}\,n$.
Hence,  $\irr(\TT)+M(\TT)\geqslant 4n$. Thus, $\SO(\mathcal{S}_n)\geqslant \left(\irr(\TT)+M(\TT)\right)/2$. As desire.
\end{proof}

\begin{remark}
The assumption that the trees are extremal is essential in
Theorem~\ref{ThmSomborn3}. Without this extremality condition, the ratio
$\SO(\TT)/\irr(\TT)$ can become arbitrarily large, even when the order of the tree
$\TT$ is held fixed.
\end{remark}

To illustrate the asymptotic relationship established in Theorem~\ref{ThmSomborn3}, Figure~\ref{fig001ThmSomborn3} provides a visual comparison of the Sombor index and the Albertson index for extremal trees with a fixed degree sequence. The figure demonstrates that, despite measuring distinct aspects of degree irregularity, the two indices exhibit proportional growth rates in these extremal configurations. This numerical and structural evidence complements the $\Theta$-asymptotic relation proved in the theorem and offers further intuition into how edge-based degree interactions scale with degree differences in extremal trees.

\begin{figure}[H]
    \centering
    \includegraphics[width=1\linewidth]{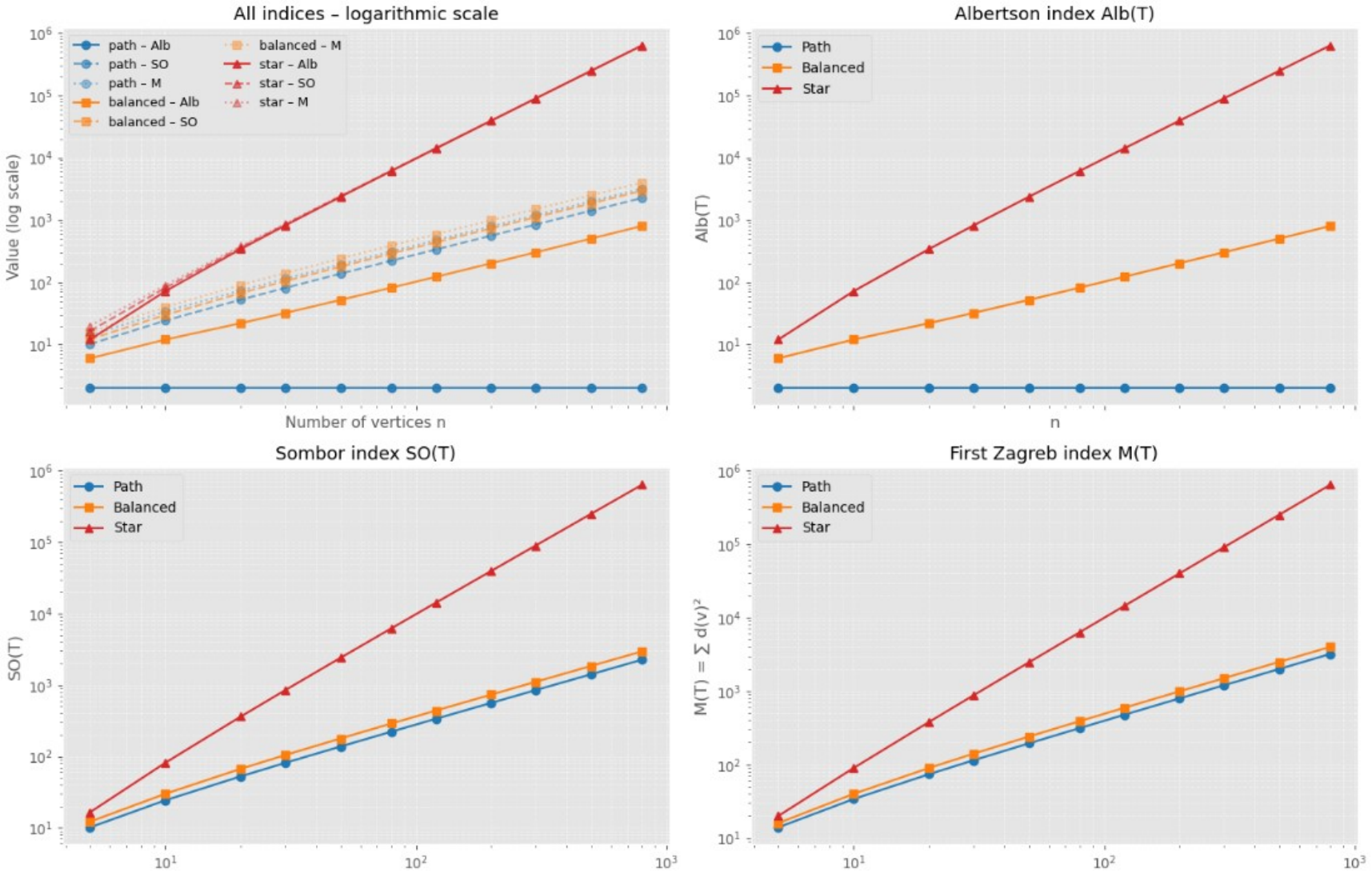}
    \caption{The impact of $\SO(\TT)$, $\irr(\TT)$ and $M(\TT)$.}
    \label{fig001ThmSomborn3}
\end{figure}

\section{Conclusion}
In this work, we established rigorous structural relationships among three important degree-based topological descriptors --- the Sigma index, the Sombor index, and the Albertson index --- within the class of trees. By expressing these quantities in terms of quadratic and linear degree interactions, we obtained sharp two-sided bounds that elucidate how global degree dispersion governs edge-based degree interactions.

First, we proved that the Sombor index is tightly controlled by the quadratic degree deviation quantified by the Sigma index. Specifically, we showed that
$$
\SO(\TT)=\Theta\!\left(\sigma(\TT)+\frac{4(n-1)^2}{n}\right),
$$
demonstrating that the Sombor index grows at the same asymptotic rate as the variance-type degree structure of the tree. This provides a direct quantitative link between vertex irregularity and edge irregularity.

Second, by restricting attention to extremal trees with a fixed degree sequence, we established a pure asymptotic equivalence between the Sombor index and the Albertson index. More precisely,
$\SO(\TT')=\Theta\!\big(\irr(\TT')\big)$. This shows that, under extremal configurations, quadratic degree interactions and absolute degree contrasts scale proportionally. The result provides a structural explanation for how edge-wise degree differences govern the magnitude of the Sombor index in optimized tree structures.
From a structural perspective, our findings show that the Sombor index acts as an intermediate descriptor that simultaneously encodes degree magnitude and degree contrast. While the Sigma index captures the global dispersion of vertex degrees and the Albertson index quantifies local irregularity along edges, the Sombor index unifies both aspects within a single quadratic framework. This accounts for its particular sensitivity to branching intensity and hierarchical degree organization in trees.

In chemical graph theory, where trees model acyclic molecular structures (e.g., alkanes), these results establish a theoretical basis for estimating one descriptor from another. Given the proven predictive value of both the Sombor and Albertson indices in QSPR and QSAR studies, a deeper understanding of their structural interdependence improves their interpretability in molecular modeling applications.

Future work may extend these relationships to broader graph classes such as unicyclic and polycyclic structures, or examine extremal problems under further constraints like fixed diameter, bounded maximum degree, or prescribed branching patterns. A compelling direction lies in analyzing generalized and weighted Sombor-type indices together with variance-based irregularity measures.  In conclusion, these results significantly advance the understanding of degree-based topological indices and establish a unified framework that seamlessly connects quadratic, linear, and variance-based measures of graph irregularity.

\section*{Declarations}
\begin{itemize}
	\item Funding: Not Funding.
	\item Conflict of interest/Competing interests: The author declare that there are no conflicts of interest or competing interests related to this study.
	\item Ethics approval and consent to participate: The author contributed equally to this work.
	\item Data availability statement: All data is included within the manuscript.
\end{itemize}

\end{document}